\theoremstyle{plain} \newtheorem{lem}{Lemma}[section]
\theoremstyle{plain} \newtheorem{prop}{Proposition}[section]
\theoremstyle{plain} \newtheorem{thm}{Theorem}[section]
\theoremstyle{definition} \newtheorem{defi}{Definition}[section]
\author{Wenjie Fang, Roberto Mantaci \\ LIAFA, Universit\'e Paris Diderot - Paris 7}
\title{A Recursive Structure of Sand Pile Model and Its Applications}
\begin{document}
\maketitle

\begin{abstract}
The Sand Pile Model (\mbox{SPM}) and its generalization, the Ice Pile Model (\mbox{IPM}), originate from physics and have various applications in the description of the evolution of granular systems. In this article, we deal with the enumeration and the exhaustive generation of the accessible configuration of the system. Our work is based on a new recursive decomposition theorem for \mbox{SPM} configurations using the notion of staircase bases. Based on this theorem, we provide a recursive formula for the enumeration of $\mbox{SPM}(n)$ and a constant amortized time (CAT) algorithm for the generation of all $\mbox{SPM}(n)$ configurations. The extension of the same approach to the Ice Pile Model is also discussed.
\end{abstract}


\section{Introduction}

The Sand Pile Model (\mbox{SPM}) is a discrete dynamic model inspired by real world physics problems, namely the dynamic of piles of granular materials such as sand or cereals in silos. A first discrete dynamic system formulation of sand piles was given by statistical physicists. The Sand Pile Model we consider here is a one-dimensional simplified model, where sand grains are stacked on a number of adjacent columns.

We now give a precise definition of \mbox{SPM}. A \emph{partition} is an infinite non-increasing sequence $(s_i)_{i \geq 0}$ of natural numbers with finite support. We denote by $\mbox{Part}(n)$ the set of partitions of $n$, \textit{i.e.} partitions $(s_i)_{i \geq 0}$ with $\sum_{i \geq 0} s_i = n$. We notice that we index the components of our sequences starting with zero. A \emph{configuration} is simply a partition. We define the set of \mbox{SPM} configurations with $n$ grains, denoted by $\mbox{SPM}(n)$, to be the set of configurations reachable from the initial configuration $(n,0, \ldots)$ with the following evolution rule called the $\mbox{FALL}$ rule:
\[ s = (s_0, \ldots, s_l, s_{l+1}, \ldots) \to s' = (s_0, \ldots, s_{l}-1, s_{l+1}+1, \ldots) \]
whenever $s_{l} \geq s_{l+1}+2$. Clearly, $\mbox{SPM}(n)$ is a subset of $\mbox{Part}(n)$.

The Ice Pile Model (\mbox{IPM}) is an extension of \mbox{SPM} with the following additional rule:
\[ (s_0, \ldots, s_l, \ldots, s_{l+k'}, \ldots) \to (s_0, \ldots, s_{l}-1, s_{l+1}, \ldots, s_{l+k'}, s_{l+k'+1}+1, \ldots) \]
whenever $s_{l}-1 = s_{l+1} = \ldots = s_{l+k'} = s_{l+k'+1} + 1$ for $k' < k$. This rule, parametrized by the integer $k$, is called the $\mbox{SLIDE}_{k}$ rule. We define the set of $\mbox{IPM}_{k}$ configurations with $n$ grains, denoted by $\mbox{IPM}_{k}(n)$, to be the set of configurations reachable from the initial configuration $(n, 0, \ldots)$ by applications of the rules $\mbox{FALL}$ and $\mbox{SLIDE}_{k}$. We also have that $\mbox{IPM}_{k}(n)$ is a subset of $\mbox{Part}(n)$.

Some results in counting configurations in $\mbox{SPM}(n)$ and $\mbox{IPM}_{k}(n)$ are already known. For instance, in \cite{latapy2001structure}, recursive formulae for $|\mbox{SPM}(n)|$ are given based on an inductive lattice structure of $\mbox{SPM}(n)$, and in \cite{corteel2002enumeration} the generating function of $|\mbox{IPM}_{k}(n)|$ is studied and its asymptotic behavior is given. On the front of exhaustive generation, recent results (\textit{e.g.} \cite{massazza2008cat, massazza2010ipm}) provide efficient exhaustive generation algorithm, more precisely in constant amortized time (CAT), using the dynamics in the evolution of sandpile configurations. However, these two lines of research rely on different aspects of \mbox{SPM}, unlike other combinatorial structures, whose counting and exhaustive generation are often the two sides of the same coin. In this article, we would like to provide a general framework in a combinatorial perspective for both counting, and efficient exhausitive generation of \mbox{SPM}/\mbox{IPM} configurations.

In this article, we study a recursive structure of \mbox{SPM} configurations that determines a recursive decomposition for them. This recursive decomposition is the key to our results on a new recursive formula for $|\mbox{SPM}(n)|$ and a new CAT algorithm to enumerate all configurations in $\mbox{SPM}(n)$. In Section 2, the basic notion of ``staircase basis'' for \mbox{SPM} is introduced, and with this notion we characterize the aforementioned combinatorial recursive structure of accessible configurations in $\mbox{SPM}(n)$, and we obtain in particular a recursive formula to determine their number. We also present a natural algorithm for the exhaustive enumeration of $\mbox{SPM}(n)$ using the recursive structure of accessible configurations, and we prove it to be CAT. Our algorithm differs from the Massazza-Radicioni algorithm because we only use  combinatorial properties of the accessible configurations and not properties related to the dynamic of the system. In Section 3, we give an intuitive presentation of how our idea can be generalized to \mbox{IPM}. We conclude with some discussions of possible directions of future work.

\section{Staircase bases and recursive structure}

\subsection{Staircase bases} \label{sect:staircase-bases}
Let $L_B(n)$ be the lattice obtained by equipping $\mbox{Part}(n)$, the set of partitions, with the ``dominance'' order $\preceq$ defined as follows: for $s,t \in Part(n)$, $s \prec t \iff \forall j, \sum_{i=0}^{j} s_i \geq \sum_{i=0}^{j} t_i$. From \cite{goles1993games} we know that $\mbox{SPM}(n)$ is a sublattice of $L_B(n)$. We can also define the following partial order called \emph{sequence order} or \emph{covering order} on $\mbox{Part}(n)$: for $s,t \in Part(n)$, we write $s \leq t$ if and only if for all $i \in \mathbb{N}$, we have $s_i \leq t_i$. This order can be readily generalized to the set of arbitrary sequences of integers.

Now we will introduce our notion of staircase basis for \mbox{SPM}.

\begin{defi}{\textbf{Staircase bases for \mbox{SPM}}} \label{def:base}
For $k \in \mathbb{N}$, we define the \emph{staircase} of order $k$ by $s(k)=(k, k-1, \ldots, 2, 1, 0, \ldots) \in L_{B}$. More precisely, $\forall i \in \mathbb{N}, s(k)_i=\max(0,k-i)$. We define $B=\{s(k) \mid k \in \mathbb{N}\}$ to be the set of \emph{staircase bases for \mbox{SPM}}.
\end{defi}

We have the following property that follows immediately from the definition of $\mbox{FALL}$.

\begin{prop} \label{prop:base-no-fall}
For $s \in B$, $s$ is a fixed point for \mbox{SPM}, that is, we cannot apply $\mbox{FALL}$ on any column of $s$.
\end{prop}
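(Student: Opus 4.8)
The plan is to unwind the definitions directly; this is a one-line observation once the FALL rule is recalled. First I would fix $k \in \mathbb{N}$ and let $s = s(k)$, so that $s_i = \max(0, k-i)$ for all $i \geq 0$. Recall that FALL is applicable at column $l$ precisely when $s_l \geq s_{l+1} + 2$, equivalently $s_l - s_{l+1} \geq 2$. So it suffices to show that for every $l \in \mathbb{N}$ we have $s_l - s_{l+1} \leq 1$.

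Next I would verify this inequality by splitting on the position of $l$ relative to $k$. For $l < k$ we have $l+1 \leq k$, hence $s_l = k-l$ and $s_{l+1} = k-(l+1) = k-l-1$, so $s_l - s_{l+1} = 1$. For $l \geq k$ we have $s_l = 0$ and also $s_{l+1} = 0$ (since $l+1 > l \geq k$), so $s_l - s_{l+1} = 0$. In either case $s_l - s_{l+1} \leq 1 < 2$, so the precondition $s_l \geq s_{l+1} + 2$ of the FALL rule fails at every column $l$. Therefore no application of FALL is possible on $s$, i.e. $s$ is a fixed point for SPM, which is exactly the claim.

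There is essentially no obstacle here: the staircase is designed so that consecutive differences are at most $1$, which is the negation of the FALL precondition. The only thing to be slightly careful about is the boundary between the ``descending'' part of the staircase and the trailing zeros, and the case analysis above handles that cleanly. One could phrase the whole argument even more compactly by noting that $\max(0,k-i) - \max(0,k-i-1) \in \{0,1\}$ for all $i$, but the two-case split is the most transparent presentation.
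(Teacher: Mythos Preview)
Your argument is correct and matches the paper's treatment: the paper simply remarks that the proposition follows immediately from the definition of $\mbox{FALL}$, and your case split making the consecutive differences explicit is exactly the intended one-line verification.
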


We now define a parameter of \mbox{SPM} configurations, called \emph{staircase width}, which will be crucial in the following.

\begin{defi}
For some $t \in \mbox{SPM}(n)$, we define its \emph{staircase width} as the integer $sw(t) = \max_{s(k) \leq t}k$. Furthermore, if $sw(t) = w$,  we call $s(w)$ the \emph{staircase socle} (or simply \emph{socle}) of $t$.
\end{defi}

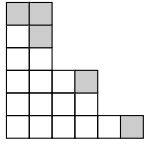
\begin{figure}[!htbp]
\centering
\begin{tikzpicture}
\def \mysqr{rectangle +(0.3,0.3)}
\foreach \x in {0,0.3,0.6,0.9,1.2} \draw (0,\x) \mysqr;
\foreach \x in {0,0.3,0.6,0.9} \draw (0.3,\x) \mysqr;
\foreach \x in {0,0.3,0.6} \draw (0.6,\x) \mysqr;
\foreach \x in {0,0.3} \draw (0.9,\x) \mysqr;
\draw (1.2,0) \mysqr;
\filldraw[black!20] (0,1.5) \mysqr;
\draw (0,1.5) \mysqr;
\filldraw[black!20] (0.3,1.2) \mysqr;
\draw (0.3,1.2) \mysqr;
\filldraw[black!20] (0.3,1.5) \mysqr;
\draw (0.3,1.5) \mysqr;
\filldraw[black!20] (0.9,0.6) \mysqr;
\draw (0.9,0.6) \mysqr;
\filldraw[black!20] (1.5,0) \mysqr;
\draw (1.5,0) \mysqr;
\end{tikzpicture}\caption{$t = (6,6,3,3,1,1,0)$ with  $sw(t)=5$ and with its socle in white} \label{figure:socle}
\end{figure}

The staircase width of an \mbox{SPM} configuration is ``monotone'' with respect to the evolution rule $\mbox{FALL}$, as showed by the following theorem which relies esssentially on Proposition \ref{prop:base-no-fall}.

\begin{prop} \label{thm:bw-monotone}
For $a,b \in \mbox{SPM}(n)$ such that $a \to b$, we have $sw(a) \leq sw(b)$. More generally, if $a \preceq b$, then $sw(a) \leq sw(b)$.
\end{prop}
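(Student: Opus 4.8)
The plan is to prove the second (more general) statement first, since the first one is the special case $b$ covers $a$ under $\to$ (recall from \cite{goles1993games} that $a \to b$ implies $a \preceq b$, as a FALL move moves one grain to the right and hence only decreases prefix sums). So suppose $a \preceq b$ in $\mbox{Part}(n)$, and let $w = sw(a)$, so that $s(w) \leq a$ in the covering (componentwise) order. The goal is to show $s(w) \leq b$, which immediately gives $sw(b) \geq w = sw(a)$.

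The key point I would isolate is a small lemma: for a staircase $s(w)$ and any $b \in \mbox{SPM}(n)$ (or even just any partition), $s(w) \leq b$ \emph{iff} for every $i$ one has $\sum_{j=0}^{i} b_j \geq \sum_{j=0}^{i} s(w)_j$ — i.e.\ on staircases, the componentwise order and the dominance order agree. One direction is trivial. For the other: if $\sum_{j \le i} b_j \ge \sum_{j \le i} s(w)_j$ for all $i$ but $b_i < s(w)_i$ for some $i$, pick the smallest such $i$. Then $i < w$ (else $s(w)_i = 0 \le b_i$), and $b_i < s(w)_i = w - i$. Since $b$ is non-increasing, $b_j \le b_i \le w - i - 1$ for all $j \ge i$. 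But $s(w)_j = w - j$ for $i \le j \le w$, and $s(w)_j - b_j \ge (w-j) - (w-i-1) = i+1-j$; summing the ``deficit'' $s(w)_j - b_j$ from $j = i$ up to $j = w-1$, and using that for $j < i$ we have $b_j \ge s(w)_j$ (minimality of $i$) — actually the cleanest version: look at the prefix sum at index $w-1$. We have $\sum_{j=0}^{w-1} s(w)_j = \binom{w+1}{2} - 0$... more directly, $\sum_{j=0}^{\infty} s(w)_j = \binom{w+1}{2}$, while $\sum_{j=0}^{\infty} b_j = n$; the contradiction comes from comparing a suitable prefix. I would write this carefully as: take $i$ minimal with $b_i < s(w)_i$; then $\sum_{j=0}^{i-1} b_j \ge \sum_{j=0}^{i-1} s(w)_j$ and $b_i \le s(w)_i - 1$, and since $b$ is non-increasing all later $b_j \le s(w)_i - 1 = w - i - 1 < w - j = s(w)_j$ for all $i \le j \le w-1$, so for $i \le j \le w-1$ the deficit is strictly positive, but also the prefix sum at $w-1$ satisfies $\sum_{j=0}^{w-1} b_j \ge \sum_{j=0}^{w-1} s(w)_j$, forcing a surplus at indices $< i$; chasing this yields a contradiction with $a \le b$ being absent — hmm.

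Let me reststructure: rather than an iff-lemma, I would argue directly. Given $s(w) \le a \preceq b$, I want $s(w) \le b$. Suppose not; let $i$ be minimal with $b_i < s(w)_i$. As above $i < w$ and, $b$ being non-increasing, $b_j \le b_i \le w-i-1$ for all $j \ge i$; in particular $b_j < s(w)_j = w-j$ for all $i \le j \le w-1$. Hence
\[ \sum_{j=0}^{w-1} b_j \;=\; \sum_{j=0}^{i-1} b_j + \sum_{j=i}^{w-1} b_j \;\le\; \sum_{j=0}^{i-1} b_j + \sum_{j=i}^{w-1}(s(w)_j - 1). \]
On the other hand, $a \preceq b$ gives $\sum_{j=0}^{i-1} a_j \ge \sum_{j=0}^{i-1} b_j$, and $s(w) \le a$ gives $\sum_{j=0}^{w-1} s(w)_j \le \sum_{j=0}^{w-1} a_j$; but $a \preceq b$ also gives $\sum_{j=0}^{w-1} a_j \ge \sum_{j=0}^{w-1} b_j$, so $\sum_{j=0}^{w-1} b_j \le \sum_{j=0}^{w-1} a_j$ — wait, that's the wrong direction for a contradiction. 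The correct exploitation is the reverse: from $s(w) \le a$ I get $\sum_{j=0}^{i-1} s(w)_j \le \sum_{j=0}^{i-1} a_j$, and I need to push grains. I expect the clean finish is: since $s(w)\le a$, in particular $a_j \ge s(w)_j = w-j$ for $j<w$; combined with $\sum a_j = \sum b_j = n$ and $a \preceq b$, a counting argument on the interval $[i, w-1]$ where $b$ is forced strictly below $s(w)$ contradicts $\sum_{j\ge i} a_j \le \sum_{j \ge i} b_j$ (the tail form of dominance, equivalent since total sums agree). That tail reformulation of $\preceq$ is the real engine, and getting the inequality chain on $[i,w-1]$ to close is the main obstacle — everything else is the trivial observation that a non-increasing sequence dropping below $w-i$ at position $i$ stays below $w-j$ through $j=w-1$.
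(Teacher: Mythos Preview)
Your direct attack on the general dominance statement has a genuine gap, and it is not just a matter of tidying the inequality chain. The concrete error is the step where, with $i$ minimal such that $b_i < s(w)_i$, you assert $b_j \le b_i \le w-i-1 < w-j = s(w)_j$ for all $i \le j \le w-1$: the inequality $w-i-1 < w-j$ is equivalent to $j \le i$, so it holds only at $j=i$ and fails for every $j>i$. Thus the ``strict deficit on $[i,w-1]$'' you want to sum simply is not there, and neither the prefix-sum nor the tail-sum version of the argument can close. More fundamentally, your argument uses only that $b$ is a non-increasing sequence summing to $n$, never that $b \in \mbox{SPM}(n)$; but the implication ``$s(w) \le a$ and $a \preceq b \Rightarrow s(w) \le b$'' is \emph{false} for general partitions: take $a = s(3) = (3,2,1)$ and $b = (2,2,2)$ in $\mbox{Part}(6)$, where $a \preceq b$ yet $sw(b)=2<3$. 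Since $(2,2,2)\notin\mbox{SPM}(6)$, any correct proof must exploit the SPM hypothesis on $b$, which yours never does.

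The paper goes in the opposite direction and the argument is essentially one line. It handles the single-step case $a \to b$ first: if applying $\mbox{FALL}$ at column $i$ made $b_i$ drop below $s(sw(a))_i$, then necessarily $a_i = s(sw(a))_i$, and since also $a_{i+1} \ge s(sw(a))_{i+1} = a_i - 1$, the rule $\mbox{FALL}$ was not applicable at column $i$ to begin with --- this is exactly Proposition~\ref{prop:base-no-fall} (staircases are fixed points). The general case $a \preceq b$ then follows because on $\mbox{SPM}(n)$ the dominance order is the reflexive--transitive closure of $\to$, so one iterates the single-step inequality along a chain $a \to \cdots \to b$.
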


\begin{proof}
We deal with the case $a \to b$, the general case follows as a consequence. Suppose we had $sw(a) > sw(b)$. In this case, there exists some index $i$ such that $a_i \geq s(sw(a))_i$ but $b_i < s(sw(a))_i$. This is possible only when we apply $\mbox{FALL}$ on $a$ at  index $i$, thus $a_i = b_i + 1$ and $a_i = s(sw(a))_i$. However, $a_{i+1} \geq s(sw(a))_{i+1}$. By Proposition \ref{prop:base-no-fall}, $\mbox{FALL}$ cannot be applied at  index $i$, which is a contradiction. Therefore $sw(a) \leq sw(b)$. 
\end{proof}

In \cite{goles1993games}, the following characterization of the (unique) fixed point in $\mbox{SPM}(n)$ is given.

\begin{prop} \label{thm:fixpoint}
In $\mbox{SPM}(n)$, the unique fixed point with respect to rule $\mbox{FALL}$ is
\[ \phi(n) = (k, k-1, \ldots, l+1, l, l, l-1, \ldots, 2, 1, 0, \ldots) \]
where $(k,l)$ is the unique pair such that $0 \leq l \leq k$ and  $n=\frac{1}{2}k(k+1)+l$.
\end{prop}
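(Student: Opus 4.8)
The plan is to split the statement into three parts: the arithmetic behind the pair $(k,l)$, the verification that $\phi(n)$ is a stable partition of $n$, and the proof that it is the \emph{unique} fixed point of $\mbox{SPM}(n)$, the last of which carries the real content. For the first part, the triangular numbers $T_k=\tfrac{1}{2}k(k+1)$ are strictly increasing with consecutive gap $T_{k+1}-T_k=k+1$, so every $n$ lies in exactly one interval $[T_k,T_{k+1})$; putting $l:=n-T_k$ gives the unique pair with $0\le l\le k$ and $n=T_k+l$. For this pair, the sequence displayed in the statement is genuinely an element of $\mbox{Part}(n)$: it is non-increasing, its only repeated consecutive value being the two copies of $l$ (which merge into a single $0$ when $l=0$); it has finite support; and its entries sum to $\bigl(k+(k-1)+\cdots+1\bigr)+l=T_k+l=n$. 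Write $\phi(n)$ for this partition.

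Second, $\phi(n)$ is a fixed point. A partition $s$ admits a $\mbox{FALL}$ at column $i$ precisely when $s_i-s_{i+1}\ge 2$; since every consecutive difference of $\phi(n)$ is $0$ or $1$, no $\mbox{FALL}$ move applies, so $\phi(n)$ is a fixed point in the sense of Proposition~\ref{prop:base-no-fall}.

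For uniqueness I would argue by confluence. Termination is easy: a $\mbox{FALL}$ at column $\ell$ moves one grain from column $\ell$ to column $\ell+1$, so it strictly decreases the partial sum $\sum_{i\le\ell}s_i$ and leaves every other partial sum unchanged, hence $a\preceq b$ with $a\ne b$ whenever $a\to b$; as $\mbox{Part}(n)$ is finite this forbids infinite $\mbox{FALL}$ sequences, and iterating $\mbox{FALL}$ from $(n,0,\ldots)$ reaches some fixed point. For confluence I would check local confluence and invoke Newman's lemma: given $a\to b$ by firing column $i$ and $a\to c$ by firing column $j$ with $i<j$, if $j\ge i+2$ the two moves touch disjoint columns and commute, while if $j=i+1$ a short computation shows that firing $i+1$ then $i$ from $b$ and firing $i$ then $i+1$ from $c$ both yield the configuration obtained from $a$ by moving one grain from column $i$ to column $i+2$. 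Newman's lemma then gives a unique normal form for $(n,0,\ldots)$, i.e. $\mbox{SPM}(n)$ has exactly one fixed point. It remains to identify it with $\phi(n)$, for which it suffices that $\phi(n)\in\mbox{SPM}(n)$, i.e. that $\phi(n)$ is $\mbox{FALL}$-reachable from $(n,0,\ldots)$. I would prove this by induction on $n$: given a $\mbox{FALL}$ sequence from $(n-1,0,\ldots)$ to $\phi(n-1)$, replay the same column firings starting from $(n,0,\ldots)$ — the invariant ``current configuration $=$ original configuration with one extra grain on column $0$'' is preserved, since that extra grain never decreases any relevant column difference — to arrive at $\phi(n-1)$ with one grain added on column $0$; this configuration has a single cliff of height $2$, which triggers a left-to-right cascade of $\mbox{FALL}$s whose terminus one checks to be exactly $\phi(n)$.

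The routine parts are the arithmetic of $(k,l)$ and the stability of $\phi(n)$. The crux is the uniqueness of the fixed point; if one prefers to lean on the already-cited fact that $\mbox{SPM}(n)$ is a finite sublattice of $L_B(n)$ rather than on Newman's lemma, uniqueness reduces to showing that the covering relations of $(\mbox{SPM}(n),\preceq)$ are precisely the $\mbox{FALL}$ moves, so that the (necessarily unique) top element of the lattice is the only configuration to which no $\mbox{FALL}$ applies. I expect this cover characterization --- or, on the confluence route, the verification that the inductive cascade really lands on $\phi(n)$ --- to be the step demanding the most care.
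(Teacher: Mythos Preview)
The paper does not prove Proposition~\ref{thm:fixpoint} at all: it is quoted from \cite{goles1993games} and used as a black box. Your proposal therefore goes well beyond the paper by supplying a self-contained argument, and the overall strategy (termination via the dominance order, local confluence plus Newman's lemma for uniqueness, induction on $n$ for reachability of $\phi(n)$) is sound and is in fact close to how the result is obtained in the original reference.

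Two small corrections. First, the adjacent-column case of local confluence is misstated. With $a\to b$ firing column $i$ and $a\to c$ firing column $i{+}1$, the common descendant $(a_i-1,\,a_{i+1},\,a_{i+2}+1)$ is reached by a \emph{single} $\mbox{FALL}$ from each side: fire column $i{+}1$ once from $b$, and fire column $i$ once from $c$. Your phrase ``firing $i{+}1$ then $i$ from $b$'' asks for two moves, and the second of these would require $a_i\ge a_{i+1}+3$, which is not guaranteed. This does not harm the argument, since Newman's lemma only needs $b\to^{*}d\leftarrow^{*}c$, but the description should be fixed. Second, in the reachability induction the boundary case $l=0$ (i.e.\ $n=\tfrac12 k(k{+}1)$) deserves a word: there $\phi(n-1)=(k{-}1,k{-}1,k{-}2,\ldots,1)$, and adding the extra grain on column $0$ already yields $(k,k{-}1,\ldots,1)=\phi(n)$, so the ``single cliff of height $2$'' you announce is absent and the cascade is empty. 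With these tweaks the proof is complete.
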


As a corollary of Propositions above, we obtain an upper bound for the staircase width of elements in $\mbox{SPM}(n)$.

\begin{prop} \label{prop:bw-upperbound}
For all $t \in \mbox{SPM}(n)$, we have $sw(t) \leq \sqrt{2n}$.
\end{prop}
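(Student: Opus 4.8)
The plan is to combine the monotonicity of staircase width (Proposition \ref{thm:bw-monotone}) with the explicit description of the fixed point (Proposition \ref{thm:fixpoint}). The key observation is that the fixed point $\phi(n)$ is the (unique) maximal element of $\mbox{SPM}(n)$ with respect to $\preceq$, so every $t \in \mbox{SPM}(n)$ satisfies $t \preceq \phi(n)$, and hence by Proposition \ref{thm:bw-monotone} we get $sw(t) \leq sw(\phi(n))$. Thus it suffices to bound $sw(\phi(n))$, which we can read off directly: writing $n = \tfrac12 k(k+1) + l$ with $0 \leq l \leq k$, the staircase $s(k)$ fits under $\phi(n)$ (in fact $\phi(n)$ dominates $s(k)$ componentwise, since $\phi(n)$ is obtained from $s(k)$ by inserting an extra column of height $l$), so $sw(\phi(n)) \geq k$; conversely $s(k+1)$ does not fit under $\phi(n)$ because $\phi(n)_0 = k < k+1 = s(k+1)_0$. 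Hence $sw(\phi(n)) = k$.

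It then remains to show $k \leq \sqrt{2n}$, which is immediate: from $n = \tfrac12 k(k+1) + l \geq \tfrac12 k(k+1) \geq \tfrac12 k^2$ we get $k^2 \leq 2n$, i.e. $k \leq \sqrt{2n}$. Chaining the inequalities gives $sw(t) \leq sw(\phi(n)) = k \leq \sqrt{2n}$ for every $t \in \mbox{SPM}(n)$, as desired.

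One gap in the above sketch is the claim that $\phi(n)$ is the maximum of $\mbox{SPM}(n)$ under $\preceq$. This is standard — $\mbox{SPM}(n)$ is a lattice with $(n,0,\dots)$ as its minimum and the unique fixed point as its maximum, and in fact any configuration can be driven to $\phi(n)$ by repeatedly applying $\mbox{FALL}$, with each application only increasing the configuration in the $\preceq$ order — but if we want to avoid invoking it, there is a more self-contained route: starting from $t$, apply $\mbox{FALL}$ repeatedly until no rule applies; this process terminates (each step strictly increases $\sum_i i\cdot s_i$, which is bounded), the terminal configuration is a fixed point, hence equals $\phi(n)$ by Proposition \ref{thm:fixpoint}, and every step is of the form $a \to b$ so Proposition \ref{thm:bw-monotone} yields $sw(t) \leq sw(\phi(n))$. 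I expect this termination/uniqueness argument to be the only real subtlety; the arithmetic bound $k \leq \sqrt{2n}$ and the computation $sw(\phi(n)) = k$ are routine.
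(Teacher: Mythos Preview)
Your proof is correct and follows essentially the same approach as the paper: use Proposition~\ref{thm:bw-monotone} to get $sw(t) \leq sw(\phi(n))$, read off $sw(\phi(n)) = k$ from Proposition~\ref{thm:fixpoint}, and bound $k \leq \sqrt{2n}$ via $n \geq k(k+1)/2 \geq k^2/2$. The paper's proof is the one-line version of exactly this chain of inequalities, and your extra justification (termination of repeated $\mbox{FALL}$, explicit check that $sw(\phi(n))=k$) just fills in details the paper leaves implicit.
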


\begin{proof}
By Proposition \ref{thm:bw-monotone} and \ref{thm:fixpoint}, $sw(t) \leq sw(\phi(n)) = k \leq \sqrt{2n}$.
\end{proof}

We can partition the set $\mbox{SPM}(n)$ according to the staircase width of configurations. We define $\mbox{SPM}(n,w)=\{s \in \mbox{SPM}(n) | sw(s)=w\}$ be the subset of $\mbox{SPM}(n)$ of all elements with staircase width $w$. For $w$ running from $1$ to $\lfloor \sqrt{2n} \rfloor$, all $\mbox{SPM}(n,w)$ partition $\mbox{SPM}(n)$. From now on, we concentrate on $\mbox{SPM}(n,w)$ instead of $\mbox{SPM}(n)$ as a whole. To generate $\mbox{SPM}(n)$ exhaustively, it suffices to provide a CAT algorithm to generate elements of $\mbox{SPM}(n,w)$, with the parameter $w$ varying from minimal value 1 to maximal value less than $\sqrt{2n}$.

\subsection{Recursive structure} \label{sect:rec-struct}

In \cite{goles2002sandpiles}, the following characterization of elements in $\mbox{SPM}(n)$ is given.

\begin{thm} \label{thm:spm-chara}
A partition $s$ is in $\mbox{SPM}(n)$ if and only if none of the following patterns (also called forbidden patterns) occur:
\begin{itemize}
\item $p,p,p$ for $p > 0$ (that is, three columns containing the same number of grains)

\item $p,p,p-1,p-2, \ldots, q+2,q+1,q,q$ for $p>q>0$ (that is, two plateaux, one of height $p$ and one of height $q$, separated by a perfect staircase)
\end{itemize}
\end{thm}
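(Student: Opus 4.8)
Write $F(n)$ for the set of partitions of $n$ avoiding both listed patterns; the goal is to prove $\mbox{SPM}(n)=F(n)$, which I would establish by the two inclusions.

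\emph{Necessity, $\mbox{SPM}(n)\subseteq F(n)$.} The plan is to show that $F(n)$ contains the initial configuration and is closed under $\mbox{FALL}$, so that everything reachable from $(n,0,\ldots)$ stays in $F(n)$. The first point is immediate: in $(n,0,0,\ldots)$ only column $0$ is nonzero, so no positive plateau can appear, let alone a triple one or a pair of them. For closure, suppose $s\in F(n)$ and $s\to s'$ by $\mbox{FALL}$ at column $l$. Since $s$ and $s'$ agree outside $\{l,l+1\}$ and $s$ has no forbidden pattern, any forbidden pattern occurring in $s'$ must lie in a window meeting $\{l,l+1\}$. I would then run through the finitely many positions that $l$ and $l+1$ can occupy inside such a window, using only that $s$ is non-increasing, that the precondition $s_l\ge s_{l+1}+2$ holds, and that $s\in F(n)$: in each case the assumed pattern of $s'$ forces $s$ either to violate monotonicity or to already contain one of the forbidden patterns, a contradiction. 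An induction on the length of a $\mbox{FALL}$-path then gives the inclusion. This direction is routine, if a little case-heavy.

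\emph{Sufficiency, $F(n)\subseteq \mbox{SPM}(n)$.} Here I would induct on the potential $\Phi(s)=\sum_{i\ge 0} i\,s_i$, a non-negative integer equal to $0$ only for $(n,0,\ldots)$, which is the base case. For $s\in F(n)$ with $\Phi(s)>0$ it suffices to produce a legal reverse $\mbox{FALL}$ — an index $l$ with $s_{l+1}\ge 1$, with $s_{l+1}>s_{l+2}$, and with $s_{l-1}>s_l$ or $l=0$, these being exactly the conditions under which moving one grain from column $l+1$ to column $l$ yields a partition $\hat s$ with $\hat s\to s$ a valid $\mbox{FALL}$ — chosen so that $\hat s$ lies again in $F(n)$. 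Then $\Phi(\hat s)=\Phi(s)-1$, the induction hypothesis gives $\hat s\in\mbox{SPM}(n)$, and hence $s\in\mbox{SPM}(n)$.

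Producing such a \emph{good} reverse move is the main obstacle. The guiding principle is that a pattern-avoiding partition, read left to right, is a concatenation of strict descents and plateaux in which every plateau has width at most $2$ and no two width-$2$ plateaux are separated by a perfect staircase; inspecting the last few nonzero columns one can single out a place where a single grain can be pushed one step left — legally, and without lengthening a plateau to width $3$ or completing a staircase between two width-$2$ plateaux — and it is precisely the second forbidden pattern that guarantees such a place always exists. Turning this into a proof needs a short structural lemma on $F(n)$ plus a careful but finite analysis of the tail of $s$; that is where the real work sits. (Alternatively, if one may invoke the known fact that $\mbox{SPM}(n)$ equals the order interval $[(n,0,\ldots),\phi(n)]$ of the dominance lattice $L_B(n)$, the theorem reduces to the combinatorial identity $\{s\in\mbox{Part}(n):s\preceq\phi(n)\}=F(n)$, which can be attacked by comparing prefix sums with those of the explicit fixed point $\phi(n)$ of Proposition \ref{thm:fixpoint} — but this merely relocates the difficulty.)
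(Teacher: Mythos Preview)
The paper does not prove Theorem~\ref{thm:spm-chara}; it is quoted from \cite{goles2002sandpiles} and used as a black box. What the paper does contribute is a constructive re-derivation of the \emph{sufficiency} direction (pattern-free implies reachable), presented as a byproduct in Section~2.3: for any $t$ whose reduced form satisfies the conditions of Proposition~\ref{prop:reduceform-chara} --- and those conditions follow directly from $t$ avoiding the forbidden patterns --- the recursive decomposition of Theorem~\ref{thm:rec-decomp-red} is unwound into an explicit generating sequence $\beta_1\cdot\beta_2\cdots\beta_{w-1}\cdot Path_{n,w}(red_w(t))$ taking $(n,0,\ldots)$ to $t$ (Proposition~\ref{prop:genseq}). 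So the paper's route to sufficiency is global and constructive: build the socle, then fill in the reduced form layer by layer following the decomposition.

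Your plan is genuinely different and more classical: closure of $F(n)$ under $\mbox{FALL}$ for necessity, and for sufficiency a one-step reverse-$\mbox{FALL}$ argument inducting on the potential $\Phi$. The necessity half is sound and, as you say, only case-heavy. For sufficiency, however, you have correctly located but not discharged the crux: you assert that every $s\in F(n)$ with $\Phi(s)>0$ admits a legal reverse $\mbox{FALL}$ whose result $\hat s$ is again in $F(n)$, and you explain heuristically why the second forbidden pattern should guarantee this, but you explicitly defer the argument (``that is where the real work sits''). That deferred lemma \emph{is} the entire content of this direction. The concrete difficulty is that the most natural candidate index --- say at the last strict descent --- can create a new forbidden pattern in $\hat s$ (it may extend a plateau to width~$3$, or complete a staircase between two width-$2$ plateaux), so one must specify a rule for choosing $l$ and then verify, case by case, that neither pattern appears in $\hat s$. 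Until that is written out, your sufficiency argument is an outline rather than a proof. By contrast, the paper's path-construction approach sidesteps the search for a single ``good'' predecessor: once the recursive decomposition is in hand, reachability is exhibited wholesale.
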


Using this theorem, we can bound the number of non-zero components of an element in $\mbox{SPM}(n,w)$. Given two finite sequences $a,b$, we denote their concatenation by $a \cdot b$.

\begin{lem} \label{lem:bw-seq-order}
The largest (in the sequence order $\leq$) $\mbox{SPM}$ configuration $s$ with $s_0 \leq w$ is $w \cdot s(w)$.
\end{lem}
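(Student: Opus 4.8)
The plan is to prove two separate facts: (a) that the configuration $w\cdot s(w)=(w,w,w-1,w-2,\ldots,1,0,\ldots)$ is itself an $\mbox{SPM}$ configuration, and (b) that every $s\in\mbox{SPM}$ with $s_0\le w$ satisfies $s\le w\cdot s(w)$ in the sequence order. (We may assume $w\ge 1$, the case $w=0$ being trivial.) For (a) I would appeal to Theorem \ref{thm:spm-chara}: in $w\cdot s(w)$ the only plateau of positive height is the leading pair $(w,w)$, after which the entries strictly decrease through $w-1,w-2,\ldots,1,0$; hence no positive value occurs three times, and there is no second positive plateau available to complete a ``two plateaux joined by a perfect staircase'' pattern, so both forbidden patterns are avoided.

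For (b) I would argue by contradiction. Set $r=w\cdot s(w)$, so that $r_0=r_1=w$ and $r_i=\max(0,\,w-i+1)$ for $i\ge 1$. Assume some $s\in\mbox{SPM}$ with $s_0\le w$ has $s_i>r_i$ for some $i$, and pick $i$ minimal. Monotonicity of $s$ gives $s_1\le s_0\le w=r_1$, hence $i\ge 2$; and if $i\ge w+2$ then $r_{i-1}=0$ forces $s_{i-1}=0$ and hence $s_i=0$, a contradiction, so in fact $2\le i\le w+1$. From $s_i\ge r_i+1=w-i+2$ and $s_i\le s_{i-1}\le r_{i-1}=w-i+2$ we obtain $s_{i-1}=s_i=w-i+2=:p$, a plateau, with $p\ge 1$.

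The heart of the argument is then a leftward induction showing $s_{i-1-k}=p+k$ for $k=0,1,\ldots,i-1$. For the step, monotonicity and the induction hypothesis give $s_{i-1-k}\ge p+k-1$; minimality of $i$ gives $s_{i-1-k}\le r_{i-1-k}=p+k$ (when $i-1-k\ge 1$); and the remaining possibility $s_{i-1-k}=p+k-1$ is excluded because it creates the plateau $s_{i-1-k}=s_{i-k}=p+k-1$ sitting atop the perfect staircase $p+k-1,p+k-2,\ldots,p$ formed by $s_{i-k},\ldots,s_{i-1}$, which together with the bottom plateau $s_{i-1}=s_i=p$ realizes a forbidden pattern --- a triple when $k=1$, and the two-plateaux pattern with ``$P$''$=p+k-1>$``$Q$''$=p>0$ when $k\ge 2$ (using $p\ge 1$, which holds since $i\le w+1$). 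Carrying this to the end, at $k=i-2$ we get $s_1=p+(i-2)=w$, whence $s_0=w$ by $s_1\le s_0\le w$; but then $s_0=s_1=w$ is again a plateau, and the same forbidden-pattern argument (with ``$P$''$=w$, ``$Q$''$=p$ when $i\ge 3$, or a triple $s_0=s_1=s_2=w$ when $i=2$) produces the final contradiction. Hence no such $i$ exists and $s\le w\cdot s(w)$.

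I expect the main obstacle to be the bookkeeping in this leftward induction rather than anything conceptual: one has to track precisely which of the two forbidden patterns is being violated at each stage and verify its side conditions $p>q>0$, and the three boundary situations --- the first step $k=1$ (where the pattern degenerates to a triple), the last step reaching column $0$ (where the external bound $s_0\le w$, not monotonicity, forces the clash), and the small case $i=2$ --- must each be handled on their own.
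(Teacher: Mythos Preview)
Your argument is correct, but it takes a noticeably longer route than the paper. The paper distills Theorem~\ref{thm:spm-chara} into the single inequality $s_{i+k}\le s_i-k+1$ (valid for any $i\ge 0$, $k>0$ with $s_{i+k}>0$), and then applies it with $i=0$ to get $s_k\le s_0-k+1\le w-k+1$ for every $k\ge 1$, which is exactly $s\le w\cdot s(w)$. Part~(a) is handled the same way in both proofs. Where you fix a minimal violating index and run a leftward induction to manufacture an explicit forbidden pattern, the paper extracts the inequality once and applies it globally; your induction is in effect re-proving that inequality in the special case $i=0$, one step at a time.

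What the paper's formulation buys is reusability: the same inequality $s_{i+k}\le s_i-k+1$, shifted to reduced forms, is precisely condition~(iii) of Proposition~\ref{prop:reduceform-chara}, and its $\mbox{IPM}$ analogue drives Lemma~\ref{lem:bw-seq-order-ipm} and Proposition~\ref{prop:reducedform-chara-ipm}. Your contradiction argument, while perfectly valid here, does not isolate this inequality as a standalone tool, so you would have to redo similar casework each time. On the other hand, your proof has the minor expository advantage that it points at a concrete forbidden pattern rather than invoking a consequence of Theorem~\ref{thm:spm-chara} whose derivation the paper leaves implicit.
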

\begin{proof}
Let $s$ be an $\mbox{SPM}$ configuration with $s_0 \leq w$. Theorem~\ref{thm:spm-chara} implies that, for any integers $i \geq 0$ and $k > 0$, $s_{i+k} \leq s_{i}-k+1$, thus $s_{k} \leq w-k+1$ and $s \leq w \cdot s(w)$. We conclude by noticing that $w \cdot s(w)$ is also an $\mbox{SPM}$ configuration.
\end{proof}

\begin{prop} \label{prop:bw-width}
For $a \in \mbox{SPM}(n,w)$, we have $a_{w+1}=0$.
\end{prop}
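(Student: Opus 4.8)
The plan is to combine the upper bound on the staircase width with the characterization of forbidden patterns. First I would record what we know: since $a \in \mbox{SPM}(n,w)$, its socle $s(w)$ satisfies $s(w) \leq a$, so in particular $a_i \geq s(w)_i = w-i$ for $0 \leq i \leq w$, and $a_w \geq 0$. The goal is to show $a_{w+1} = 0$, which, since $a$ is non-increasing, is equivalent to showing $a_{w+1} \leq 0$.

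Next I would apply Lemma~\ref{lem:bw-seq-order} in the shifted form derived in its proof: for any $i \geq 0$ and $k > 0$, an $\mbox{SPM}$ configuration satisfies $a_{i+k} \leq a_i - k + 1$. Taking $i = 0$ and $k = w+1$ gives $a_{w+1} \leq a_0 - w$, so if $a_0 \leq w$ we would be done immediately. The difficulty is precisely that $a_0$ can exceed $w$ (see the example $t = (6,6,3,3,1,1,0)$ with $sw(t) = 5$ in Figure~\ref{figure:socle}, where $t_0 = 6 > 5$). So the bound from Lemma~\ref{lem:bw-seq-order} alone is not enough, and the main obstacle is ruling out the configurations where the first few columns rise above the socle height $w$.

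The key step, then, is to argue that if $a_{w+1} \geq 1$ then one can enlarge the socle, contradicting $sw(a) = w$. The idea: suppose $a_{w+1} \geq 1$. I want to show $s(w+1) \leq a$, i.e. $a_i \geq w+1-i$ for all $0 \leq i \leq w+1$. For the tail this is where the forbidden-pattern analysis enters. Concretely, consider the last index $j$ with $a_j \geq 1$; since $a_{w+1} \geq 1$ we have $j \geq w+1$, so $a$ has at least $w+2$ non-zero columns. Now reading $a$ from column $0$ down to column $j$, the inequality $a_{i+1} \geq a_i - 1$ must fail somewhere (otherwise $a_0 \geq a_j + j \geq 1 + (w+1)$, which combined with $a$ being an $\mbox{SPM}$ configuration of bounded size forces a staircase socle of width at least $w+1$). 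I would make this precise by showing directly that having $w+2$ positive columns in an $\mbox{SPM}$ configuration forces $s(w+1) \le a$: walking the columns and using that $a_{i+1} \in \{a_i, a_i - 1\}$ is impossible for three consecutive equal values (first forbidden pattern) and that two plateaux joined by a perfect staircase (second forbidden pattern) is also impossible, one deduces that the $w+2$ positive values $a_0 \ge a_1 \ge \cdots \ge a_{w+1} \ge 1$ must in fact dominate $w+1, w, \ldots, 1$, i.e. $a_i \ge w+1-i$. That gives $s(w+1) \le a$ and hence $sw(a) \ge w+1$, a contradiction. Therefore $a_{w+1} = 0$.

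The step I expect to be the real work is the last one: turning "$w+2$ positive columns, non-increasing, no forbidden pattern" into "$a_i \ge w+1-i$". The cleanest route is probably a short induction from the right end, using the forbidden patterns to forbid the column sequence from "stalling" (two equal values) more than locally, so that over $w+1$ steps the height cannot have dropped from $a_0$ to $a_{w+1} \ge 1$ without $a_0 \ge w+1$; symmetrically, one checks the intermediate columns also clear the socle. Everything else — the reduction to $a_{w+1} \le 0$, the invocation of $s(w) \le a$, and the shifted inequality from Lemma~\ref{lem:bw-seq-order} — is routine.
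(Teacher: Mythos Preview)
Your contrapositive approach is correct, but you have badly overestimated the difficulty of what you call ``the real work.'' The step ``$a_{w+1}\geq 1 \Rightarrow a_i \geq w+1-i$ for all $i\leq w+1$'' is immediate from the very inequality you already extracted from Lemma~\ref{lem:bw-seq-order}: rewriting $a_{i+k}\leq a_i - k + 1$ as $a_i \geq a_{i+k} + k - 1$ and taking $i+k=w+1$ gives $a_i \geq a_{w+1} + (w+1-i) - 1 \geq w+1-i = s(w+1)_i$. No separate induction or forbidden-pattern analysis is needed; the two-line computation already yields $s(w+1)\leq a$, contradicting $sw(a)=w$.

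The paper's proof is essentially the direct (non-contrapositive) version of the same idea. Rather than assuming $a_{w+1}\geq 1$ and pushing leftward, it uses the fact that $sw(a)=w$ (and not $w+1$) forces the socle to \emph{touch} $a$ somewhere: there is an index $i\leq w$ with $a_i = w-i$ exactly. Then Lemma~\ref{lem:bw-seq-order} applied to the suffix $(a_i,a_{i+1},\ldots)$, which has first entry $w-i$, gives $a_{w+1}\leq (w-i) - (w+1-i) + 1 = 0$. Both arguments rest on the same inequality $a_{i+k}\leq a_i-k+1$; the paper simply anchors it at the touching point rather than at column $w+1$, which makes the proof two lines instead of a proof by contradiction.
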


\begin{proof}
For $a \in \mbox{SPM}(n,w)$, we have $sw(a)=w$, thus there exists an index $i \leq w$ such that $a_i=w-i$. The suffix $(a_i, a_{i+1}, \ldots)$ is also an $\mbox{SPM}$ configuration. By Lemma~\ref{lem:bw-seq-order}, we have $a_{w+1} \leq 0$.
\end{proof}

As a consequence, we can express an element $a \in \mbox{SPM}(n,w)$ as a $(w+1)$-tuple $a=(a_0,\ldots,a_{w})$. We now introduce a representation of elements in $\mbox{SPM}(n,w)$ obtained by ``removing'' the socle of each element.

\begin{defi}{\textbf{Reduced form for} $\mbox{SPM}(n,w)$} \label{def:reducedform}
For $s \in \mbox{SPM}(n,w)$, we define $red_w(s)$ as follows:
\[ \forall i \in \{0, \ldots, w\}, (red_w(s))_i = s_i - w + i. \]
Thus $red_w(s)$ is obtained by simply substracting $(w, w-1, \ldots, 0)$ pointwise from $s$. We call $red_w(s)$ the \emph{reduced form} of $s$.
\end{defi}

Note that, in the above definition, the subscript $w$ is not necessary. It has been added for emphasizing the staircase width of the original configuration, as well as the number of components of the corresponding reduced form. Components in a reduced form may be zero. We notice that the leftmost component that is equal to zero may be the component in position $w$. Think for instance of configuration $(4,3)$ whose reduced form is $(2,2,0)$. For our proposes, however, it is important to highlight this apparently superflous component, as the leftmost zero component of a reduced form is pivotal to define our recursive decomposition. 

Let $R(n,w)$ be the set of reduced forms of elements in $\mbox{SPM}(n,w)$. The map $red_w$ is clearly a bijection between $R(n,w)$ and $\mbox{SPM}(n,w)$. We should also notice that a reduced form is not necessarily an \mbox{SPM} configuration. An example of an \mbox{SPM} configuration and of its reduced form is given in Figure \ref{figure:decomp}, where the white part is the socle and the gray part corresponds to the reduced form.

\begin{figure}[!htbp]
\centering
\begin{tikzpicture}
\def \mysqr{rectangle +(0.3,0.3)}
\foreach \x in {0,0.3,0.6,0.9,1.2} \draw (0,\x) \mysqr;
\foreach \x in {0,0.3,0.6,0.9} \draw (0.3,\x) \mysqr;
\foreach \x in {0,0.3,0.6} \draw (0.6,\x) \mysqr;
\foreach \x in {0,0.3} \draw (0.9,\x) \mysqr;
\draw (1.2,0) \mysqr;
\filldraw[black!20] (0,1.5) \mysqr;
\draw (0,1.5) \mysqr;
\filldraw[black!20] (0.3,1.2) \mysqr;
\draw (0.3,1.2) \mysqr;
\filldraw[black!20] (0.3,1.5) \mysqr;
\draw (0.3,1.5) \mysqr;
\filldraw[black!20] (0.9,0.6) \mysqr;
\draw (0.9,0.6) \mysqr;
\filldraw[black!20] (1.5,0) \mysqr;
\draw (1.5,0) \mysqr;
\node[text width = 7cm, text centered] at (0.9,-0.55) {$t = (6,6,3,3,1,1,0)$ with $r = red_5(t) = (1,2,0,1,0,1)$};
\end{tikzpicture}
\caption{An example of \mbox{SPM} configuration and of its reduced form} \label{figure:decomp}
\end{figure}

The characterization of elements in $\mbox{SPM}(n,w)$ translates into a characterization of their reduced forms.

\begin{prop} \label{prop:reduceform-chara}
A $(w+1)$-tuple $r = (r_0, r_1, \ldots, r_w)$ of natural numbers is in $R(n,w)$ if and only if the following conditions are satisfied.
\begin{itemize}
\item [\textbf{(i)}] $\sum_{i=0}^{w} r_i = n - \frac{1}{2}w(w+1)$.
\item [\textbf{(ii)}] There exists at least an index $i_0$ with $0 \leq i_0 \leq w$ such that $r_{i_0}=0$.
\item [\textbf{(iii)}] For all $i,j$ such that $0 \leq i < j \leq w$, we have $r_i \geq r_j - 1$.
\end{itemize}
\end{prop}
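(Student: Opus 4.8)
The plan is to prove the two implications separately, keeping throughout the explicit correspondence $s_i = r_i + w - i$ for $0 \le i \le w$ (and $s_i = 0$ for $i > w$), which by Proposition~\ref{prop:bw-width} inverts $red_w$ on $\mbox{SPM}(n,w)$.

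For the ``only if'' direction I would start from $s \in \mbox{SPM}(n,w)$ with $r = red_w(s)$. Condition~\textbf{(i)} is a one-line computation: since $s$ is supported on $\{0,\ldots,w\}$ we get $\sum_{i=0}^w r_i = \sum_{i=0}^w s_i - \sum_{i=0}^w (w-i) = n - \frac{1}{2}w(w+1)$. For~\textbf{(ii)} I unfold the definition of $sw$: $sw(s)=w$ gives $s(w)\le s$ (which already yields $r_i \ge 0$) and $s(w+1)\not\le s$; as $s(w+1)$ and $s$ agree on indices $> w$, the failure of $s(w+1)\le s$ must occur at some $i_0 \le w$, where $s_{i_0} < w+1-i_0$ together with $s_{i_0}\ge w-i_0$ forces $s_{i_0}=w-i_0$, i.e.\ $r_{i_0}=0$. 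For~\textbf{(iii)} I invoke the inequality $s_j \le s_i - (j-i) + 1$ for $i<j$ already extracted from Theorem~\ref{thm:spm-chara} in the proof of Lemma~\ref{lem:bw-seq-order}; substituting $s_k = r_k + w - k$ turns it into $r_j \le r_i + 1$.

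For the ``if'' direction, given $r$ satisfying~\textbf{(i)}--\textbf{(iii)} I define $s$ by the correspondence above and verify $s \in \mbox{SPM}(n,w)$. That $s$ is a partition of $n$ is immediate: the sum is $n$ by~\textbf{(i)}, and $s_i - s_{i+1} = r_i - r_{i+1} + 1 \ge 0$ by~\textbf{(iii)} applied to $(i,i+1)$ when $i<w$, while $s_w = r_w \ge 0 = s_{w+1}$. For $sw(s)=w$: $r_i \ge 0$ gives $s(w)\le s$, and the index $i_0$ of~\textbf{(ii)} gives $s_{i_0}=w-i_0 < s(w+1)_{i_0}$, so $s(w+1)\not\le s$, and since the staircases are nested no $s(k)$ with $k>w$ lies below $s$ either. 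The only substantial point, and the step I expect to be the main obstacle, is membership in $\mbox{SPM}$, which by Theorem~\ref{thm:spm-chara} means ruling out the two forbidden patterns, and this requires reading those patterns through the change of variables. Under $s_k = r_k + w - k$, a flat step $s_k = s_{k+1}$ means $r_{k+1}=r_k+1$ and a unit descent $s_k = s_{k+1}+1$ means $r_{k+1}=r_k$; hence a block $p,p,p$ forces $r$ to increase by $1$ on two consecutive steps, and a block $p,p,p-1,\ldots,q+1,q,q$ forces $r$ to increase by $1$, stay constant along the staircase part, then increase by $1$ again. In both cases the two endpoints $i<j$ of the block, which lie in $\{0,\ldots,w\}$ because every entry of the pattern is positive and $s$ vanishes beyond $w$, satisfy $r_j = r_i + 2$, contradicting~\textbf{(iii)}. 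Finally $red_w(s)=r$ by construction, so $r\in R(n,w)$, which closes the argument.
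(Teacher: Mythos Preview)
Your proof is correct and follows essentially the same route as the paper's: both directions hinge on translating Theorem~\ref{thm:spm-chara} through the substitution $s_i = r_i + w - i$, reducing the forbidden-pattern check to the inequality $r_i \ge r_j - 1$. Your treatment is in fact slightly more careful than the paper's in two places: you explicitly verify $sw(s)=w$ in the converse direction (the paper jumps straight to $t\in\mbox{SPM}(n,w)$), and you spell out how each forbidden pattern forces $r_j = r_i + 2$, whereas the paper compresses both patterns into the single inequality $t_i - (j-i) + 1 < t_j$ and derives $r_i < r_j - 1$ from there.
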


\begin{proof}
Let $t \in \mbox{SPM}(n,w)$ and $r = red_{w}(t)$. Conditions (i) and (ii) follow directly from the definition of $\mbox{SPM}(n,w)$ and of $red_{w}$. For condition (iii), we notice that Theorem~\ref{thm:spm-chara} implies that $t_{i+k} \leq t_i - k + 1$ for any $k>0$ and $i \geq 0$, thus for any $i,j$ with $0 \leq i < j \leq w$, $r_j + w - j = t_j \leq t_i - (j - i) + 1 = r_i + w - j + 1$, and we have $r_j \leq r_i + 1$.

For the other direction, let $r$ be a $(w+1)$-tuple that satisfies (i), (ii) and (iii), and let $t$ be the $(w+1)$-tuple obtained by adding $r$ and $s(w)$ component-wise. To prove that $r \in R(n,w)$, it suffices to prove that $t \in \mbox{SPM}(n,w)$. From condition (iii), it follows that $t_i = r_i + w - i \geq r_{i+1} - 1 + w - i = t_{i+1}$, thus $t$ is a partition. By Theorem~\ref{thm:spm-chara}, we only need to prove that no forbidden pattern exists in $t$. We suppose that such a pattern exists between column $i$ and $j$, we then have $t_i - (j - i) + 1 < t_j$, thus $r_i + w - j + 1 < r_j + w - j$, which implies $r_i < r_j - 1$, contradicting condition (iii). Hence $t \in \mbox{SPM}(n,w)$.
\end{proof}

We notice that condition (i) only ensures the weight $n$ to be correct. Therefore, any $(w+1)$-tuple $r$ verifying (ii) and (iii) must be in some $R(n,w)$ for an appropriate $n$. We also notice that condition (iii) is preserved by taking prefixes. That is to say, if a tuple $r$ verifies (iii), then all of its prefixes also verify (iii). With these remarks, we provide the following decomposition theorem for reduced forms, which is the main result of this article. Both our enumeration formula and our exhaustive generation algorithm rely on this theorem.

\begin{thm}{\textbf{Decomposition of reduced forms}} \label{thm:rec-decomp-red}

A reduced form $r \in R(n,w)$ can be uniquely decomposed into the following form:
\[ (r_0, \ldots, r_w) = (t_0, \ldots, t_{l-1}, 0, u_0, \ldots, u_{w-l-1}), \]
such that $t = (t_0, \ldots, t_{l-1})$ and $u=(u_0, \ldots, u_{w-l-1})$ verify the following conditions.
\begin{itemize}
\item If $u$ is not empty, for all $i \in \{0, \ldots, w - l - 1\}$, we have $u_i \in \{0, 1\}$.
\item If $t$ is not empty, for all $i \in \{0, \ldots, l - 1\}$, we have $t_i > 0$.
\item In the case that $t$ is not empty, let $m$ be the minimum of $t_i$ for $0 \leq i \leq l-1$ and $r' = (t_0-m, t_1-m, \ldots, t_{l-1}-m)$. Then the tuple $r'$ is in some $R(n',l-1)$, where $n'$ can be easily calculated.
\end{itemize}
We refer to this decomposition by writing $r = (l,u,m), r'$. We represent an empty tuple by a pair of parentheses $()$. When $t$ is empty, we take $m=0$.
\end{thm}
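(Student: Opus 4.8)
The plan is to read off the decomposition directly from the position of the leftmost zero component of $r$, whose existence is guaranteed by condition (ii) of Proposition~\ref{prop:reduceform-chara}. Concretely, I would let $l$ be the smallest index with $r_l = 0$, and set $t = (r_0, \ldots, r_{l-1})$ and $u = (r_{l+1}, \ldots, r_w)$, so that $r = t \cdot (0) \cdot u$, with $t$ having $l$ components and $u$ having $w - l$ components, matching the form in the statement. The minimality of $l$ immediately gives $t_i > 0$ for all $i$, so the second bullet holds. For the first bullet, I would invoke condition (iii): since $r_l = 0$ and each $u_i = r_{l+1+i}$ has $l < l+1+i \le w$, we get $u_i = r_{l+1+i} \le r_l + 1 = 1$, while $u_i \ge 0$ because $r$ is a tuple of natural numbers; hence $u_i \in \{0,1\}$.

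For the third bullet (the case $t$ nonempty), I would set $m = \min_{0 \le i \le l-1} t_i \ge 1$ and $r' = (t_0 - m, \ldots, t_{l-1} - m)$, which is an $l$-tuple of natural numbers, i.e.\ a candidate element of $R(n', l-1)$. It has a zero component by the choice of $m$, so $r'$ satisfies condition (ii); and it satisfies condition (iii) because $t$ is a prefix of $r$ (condition (iii) passes to prefixes, as remarked after Proposition~\ref{prop:reduceform-chara}) and condition (iii) is visibly unchanged by subtracting the constant $m$ from every entry. By the remark following Proposition~\ref{prop:reduceform-chara}, any tuple satisfying (ii) and (iii) lies in $R(n', l-1)$ for the unique $n'$ making condition (i) hold, namely $n' = \tfrac12 (l-1)l + \sum_{i=0}^{l-1}(t_i - m)$. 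When $t$ is empty (that is, $l = 0$, equivalently $r_0 = 0$) there is nothing to check, and we set $m = 0$ and $r' = ()$ by convention; the case $u$ empty ($l = w$) is handled symmetrically with no condition to verify, and $w = 0$ is the corner case where both $t$ and $u$ are empty.

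For uniqueness, I would argue that in any decomposition of the prescribed shape the integer $l$ must be the position of some zero of $r$ (the explicit middle $0$), and the requirement $t_i > 0$ forces $r_0, \ldots, r_{l-1}$ all to be nonzero, so $l$ must be precisely the leftmost zero of $r$; once $l$ is pinned down, $t$, $u$, $m$, and $r'$ are all determined by the formulas above, so the decomposition is unique.

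I do not expect a genuine obstacle: once the role of the leftmost zero is identified the argument is essentially bookkeeping. The only point needing a little care is checking that $r'$ is an honest reduced form and not merely a nonnegative tuple — this is exactly where the two observations that condition (iii) descends to prefixes and is translation-invariant are used — together with keeping the index ranges consistent across the empty-$t$, empty-$u$, and $w = 0$ corner cases.
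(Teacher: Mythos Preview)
Your proposal is correct and follows essentially the same line as the paper: take $l$ to be the position of the leftmost zero (from condition~(ii)), use condition~(iii) against $r_l=0$ to force $u_i\in\{0,1\}$, and observe that condition~(iii) is preserved under taking prefixes and under subtracting a constant to conclude $r'\in R(n',l-1)$. Your treatment of uniqueness and of the degenerate cases is in fact slightly more explicit than the paper's, which simply asserts uniqueness ``by definition''.
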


\begin{proof}
We start by the validity of our decomposition. As $r \in R(n,w)$, by condition (ii) in Proposition \ref{prop:reduceform-chara}, there exists some index $i$ such that $r_{i}=0$. To ensure all parts of $t$ to be strictly positive, we take $l$ to be the minimum $i_0$ such that $r_{i_0}=0$. By condition (iii) in Proposition \ref{prop:reduceform-chara}, we deduce that $u$ is a sequence of $0$'s and $1$'s. The tuple $r'$ clearly has only positive parts and has at least one $0$ component, thus it verifies (ii). The fact that $r'$ satisfies condition (iii) is provided by the fact that property (iii) is invariant not only by taking prefixes (so $t$ satisfies it), but also when a constant is substracted from every part of a partition (and $r'$ is obtained from $t$ by subtracting the integer $m$ from all its parts). Therefore $r'$ is in some $R(n', l-1)$. The weight $n'$ can be easily calculated from $m$ and $u$.

Our decomposition is clearly unique by definition.
\end{proof}

As an example, we consider the decomposition of the reduced form $r = (1,2,0,1,0,1)$ in Figure \ref{figure:decomp}. Clearly we have $r = (2,(1,0,1),1), (0,1)$.

Since $r'$ in the decomposition is also a reduced form, we can apply the decomposition recursively. An example of a full recursive decomposition can be found in Figure \ref{figure:decomp4}. Different colors are used for different levels of decomposition. Graphically, we can see that at each level, we always have a skewed strip of thickness $m$ on the left of position $l$, and some ``dust grains'' corresponding to $u$ on the right of position $l$.

\begin{figure}[!htbp]
\centering
\begin{tikzpicture}
\def \mysqr{rectangle +(0.3,0.3)}
\foreach \y in {0,0.3,0.6,0.9,1.2,1.5,1.8,2.1,2.4,2.7,3.0,3.3} \draw (0,\y) \mysqr;
\foreach \y in {0,0.3,0.6,0.9,1.2,1.5,1.8,2.1,2.4,2.7,3.0} \draw (0.3,\y) \mysqr;
\foreach \y in {0,0.3,0.6,0.9,1.2,1.5,1.8,2.1,2.4,2.7} \draw (0.6,\y) \mysqr;
\foreach \y in {0,0.3,0.6,0.9,1.2,1.5,1.8,2.1,2.4} \draw (0.9,\y) \mysqr;
\foreach \y in {0,0.3,0.6,0.9,1.2,1.5,1.8,2.1} \draw (1.2,\y) \mysqr;
\foreach \y in {0,0.3,0.6,0.9,1.2,1.5,1.8} \draw (1.5,\y) \mysqr;
\foreach \y in {0,0.3,0.6,0.9,1.2,1.5} \draw (1.8,\y) \mysqr;
\foreach \y in {0,0.3,0.6,0.9,1.2} \draw (2.1,\y) \mysqr;
\foreach \y in {0,0.3,0.6,0.9} \draw (2.4,\y) \mysqr;
\foreach \y in {0,0.3,0.6} \draw (2.7,\y) \mysqr;
\foreach \y in {0,0.3} \draw (3,\y) \mysqr;
\foreach \y in {0} \draw (3.3,\y) \mysqr;
\filldraw[black!20] (3.6,0) \mysqr;
\filldraw[black!20] (3.3,0.3) \mysqr;
\filldraw[black!20] (2.4,1.5) \mysqr;
\filldraw[black!20] (2.4,1.2) \mysqr;
\filldraw[black!20] (2.1,1.8) \mysqr;
\filldraw[black!20] (2.1,1.5) \mysqr;
\filldraw[black!20] (1.8,2.1) \mysqr;
\filldraw[black!20] (1.8,1.8) \mysqr;
\filldraw[black!20] (1.5,2.4) \mysqr;
\filldraw[black!20] (1.5,2.1) \mysqr;
\filldraw[black!20] (1.2,2.7) \mysqr;
\filldraw[black!20] (1.2,2.4) \mysqr;
\filldraw[black!20] (0.9,3) \mysqr;
\filldraw[black!20] (0.9,2.7) \mysqr;
\filldraw[black!20] (0.6,3.3) \mysqr;
\filldraw[black!20] (0.6,3) \mysqr;
\filldraw[black!20] (0.3,3.6) \mysqr;
\filldraw[black!20] (0.3,3.3) \mysqr;
\filldraw[black!20] (0,3.9) \mysqr;
\filldraw[black!20] (0,3.6) \mysqr;
\filldraw[black!50] (2.4,1.8) \mysqr;
\filldraw[black!50] (1.8,2.4) \mysqr;
\filldraw[black!50] (1.5,2.7) \mysqr;
\filldraw[black!50] (0.6,3.6) \mysqr;
\filldraw[black!50] (0.3,3.9) \mysqr;
\filldraw[black!90] (0.3,4.2) \mysqr;
\filldraw[black!50] (0,4.2) \mysqr;
\filldraw[black!90] (0,4.5) \mysqr;
\draw (3.3,0.3) \mysqr;
\draw (3.6,0) \mysqr;
\draw (2.4,1.2) \mysqr;
\draw (2.4,1.5) \mysqr;
\draw (2.1,1.5) \mysqr;
\draw (2.1,1.8) \mysqr;
\draw (1.8,1.8) \mysqr;
\draw (1.8,2.1) \mysqr;
\draw (1.5,2.1) \mysqr;
\draw (1.5,2.4) \mysqr;
\draw (1.2,2.4) \mysqr;
\draw (1.2,2.7) \mysqr;
\draw (0.9,2.7) \mysqr;
\draw (0.9,3) \mysqr;
\draw (0.6,3) \mysqr;
\draw (0.6,3.3) \mysqr;
\draw (0.3,3.3) \mysqr;
\draw (0.3,3.6) \mysqr;
\draw (0,3.6) \mysqr;
\draw (0,3.9) \mysqr;
\draw (2.4,1.8) \mysqr;
\draw (1.8,2.4) \mysqr;
\draw (1.5,2.7) \mysqr;
\draw (0.6,3.6) \mysqr;
\draw (0.3,3.9) \mysqr;
\draw (0.3,4.2) \mysqr;
\draw (0,4.2) \mysqr;
\draw (0,4.5) \mysqr;
\node[text width = 8cm, text centered] at (1.8,-0.35) {$r = (4,4,3,2,2,3,3,2,3,0,0,1,1)$};
\end{tikzpicture}
\caption{An example of full recursive decomposition of reduced form} \label{figure:decomp4}
\end{figure}
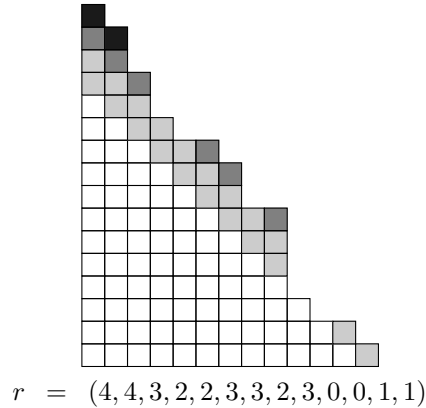

\subsection{Construction of generating sequences}
Before proceeding to counting and generating $\mbox{SPM}(n)$, we provide a construction of a sequence of applications of the  $\mbox{FALL}$ rule that allows to obtain any given accessible \mbox{SPM} configuration from the initial one. This construction is essentially an alternative proof of one direction of Theorem~\ref{thm:spm-chara}.

\begin{defi}{(\textbf{Generating Sequence})}
For $t \in \mbox{SPM}(n)$, we say that a finite sequence $(a_i) = a_0, a_1, \ldots, a_{l-1}$ \emph{generates} $t$ if $t$ can be obtained from $(n, 0, \ldots)$ by successively applying $\mbox{FALL}$ at index $a_i$ for $i$ from $0$ to $l-1$, which implies in particular that each such application must be valid. In this case, we call $(a_i)$ a \emph{generating sequence} of $t$.
\end{defi}

We can say that a generating sequence $(a_i)$ is a certificate that $t$ is in $\mbox{SPM}(n)$, as it provides a path to go from $(n, 0, \ldots)$ to $t$ by applying $\mbox{FALL}$. Given a finite sequence $(a_i)$, we can check its validity by applying $\mbox{FALL}$ accordingly, and if it is indeed valid, we also obtain the corresponding $t$. Conversely, we will now provide a method that, given $t \in \mbox{SPM}(n)$, constructs a generating sequence $(a_i)$ of $t$. We also denote by $a^{[m]}$ the $m$-fold repetition of $a$. We recall that the concatenation operation of two sequences is denoted by $\cdot$

First we will construct a generating sequence for staircases $s(k)$. Let $\alpha_i$ be the sequence $0,1,\ldots,i-1$ and $\beta_i$ be the sequence $\alpha_i \cdot \alpha_{i-1} \cdot \ldots \cdot \alpha_1$ (here $/cdot$ denotes the concatenation of sequences). Let $\widehat{s(k)}$ be the partition obtained by adding $k+1$ grains to the first column of $s(k)$. It can be verified that $\beta_k$ applied to $\widehat{s(k)}$ produces $s(k+1)$. With this observation, we see clearly that $\beta_1 \cdot \beta_2 \ldots \beta_{k-1}$ generates $s(k)$ when we start from the initial configuration $(n, 0, 0, \ldots)$ with $n=k(k+1)/2$. For $n \geq k(k+1)/2$, the same sequence gives the configuration $(n-k(k-1)/2, k-1, k-2, \ldots, 1, 0)$, corresponding to the reduced form $(n-k(k+1)/2, 0, \ldots,0)$.

We want to study the effects of the rule $\mbox{FALL}$ on reduced forms. We define the rule $\mbox{FALL}'$ operating on reduced forms $r \in R(n,w)$ as follows:
\[ r= (r_0, \ldots, r_{w}) \to r' = (r_0, \ldots, r_{l}-1, r_{l+1}+1, \ldots, r_{w}), \]
if $r_{l} \geq r_{l+1}+1$.

This is equivalent to say that the following diagram commutes:

\begin{center}
\begin{tikzpicture}
\node (s) at (0,2) {$t$};
\node (t) at (2,2) {$t'$};
\node (sred) at (0,0) {$r$};
\node (tred) at (2,0) {$r'$};
\draw[->] (s.south) -- (sred.north) node[left,midway] {$red_w$};
\draw[->] (t.south) -- (tred.north) node[right,midway] {$red_w$};
\draw[->] (s.east) -- (t.west) node[above,midway] {$\mbox{FALL}(l)$};
\draw[->] (sred.east) -- (tred.west) node[above,midway] {$\mbox{FALL}'(l)$};
\end{tikzpicture}
\end{center}

To construct $t \in \mbox{SPM}(n)$, we denote by $w=sw(s)$ its staircase width and $r=red_w(t)$ its reduced form. First we use $\beta_1 \cdot \beta_2 \cdot \ldots \cdot \beta_{w-1}$ to construct the  socle of $t$, then we pick up the viewpoint of reduced form to construct the rest. The remaining task is then to construct a path from the reduced form $(n-w(w+1)/2, 0, \ldots,0)$ to $r$ using $\mbox{FALL}'$.

For simplicity, we denote by $(0)$ the only element in $R(0,0)$. Using Theorem \ref{thm:rec-decomp-red}, we now define recursively a function $Path_{n,w}$ such that, given a reduced form $r \in R(n,w)$, constructs a path from $(n-w(w+1)/2, 0, \ldots,0)$ to $r$ in the Hasse diagram of $\mbox{SPM}(n)$ by applying $\mbox{FALL}'$. 

For $w = 0$, $Path_{n,w}(r)=()$, the empty sequence. For $w>0$, let $r=((t,m),l,u)$ be the decomposition of $r$. For $i_1 > \ldots > i_k > l$ such that $u_{i_j - l - 1}=1$ for $1 \leq j \leq k$, we define $seq_0(u)=\alpha_{i_1} \cdot \alpha_{i_2} \cdot \ldots \cdot \alpha_{i_k}$. Each $\alpha_{i_j}$ sends a grain (\textit{i.e.} adds 1) to the component $r_{i_j}=u_{i_j-l-1}=1$. In order to construct the $m$ layers of grains from position $0$ to position $l-1$, we only need to repeat $m$ times the sequence $\alpha_{l-1} \cdot \alpha_{l-2} \ldots \alpha_{1}$. We define $seq_1(l,m)=(\alpha_{l-1} \cdot \alpha_{l-2} \cdot \ldots \cdot \alpha_{1})^{m}$. Finally we define $Path_{n,w}(r)$ recursively as follows:
\[ Path_{n,w}(r)=seq_0(u) \cdot seq_1(l,m) \cdot Path_{n-k-lm,l-1}(t). \]

We have the following proposition stating the correctness of the construction $Path_{n,w}$.

\begin{prop} \label{prop:genseq}
For $s \in \mbox{SPM}(n)$ with $w=sw(s)$, $\beta_{1} \cdot \beta_{2} \cdot \ldots \cdot \beta_{w-1} \cdot Path_{n,w}(red_w(s))$ is a generating sequence of $s$.
\end{prop}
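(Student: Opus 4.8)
The plan is to prove correctness of the construction by induction on $w$, relying on the commuting diagram between $\mbox{FALL}$ and $\mbox{FALL}'$ to reduce everything to working with reduced forms. Since we have already observed that $\beta_1 \cdot \beta_2 \cdot \ldots \cdot \beta_{w-1}$ starting from $(n,0,\ldots)$ produces the configuration with reduced form $(n - w(w+1)/2, 0, \ldots, 0) \in R(n,w)$, it suffices to show that $Path_{n,w}(r)$ is a valid sequence of $\mbox{FALL}'$ applications leading from $(n-w(w+1)/2,0,\ldots,0)$ to $r$ inside $R(n,w)$. Then translating back via $red_w^{-1}$ (the commuting diagram, applied repeatedly) shows the concatenated sequence is a valid generating sequence of $s$.

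First I would set up the base case: for $w=0$ the only reduced form is $(0) \in R(0,0)$, and $Path_{n,0}$ returns the empty sequence, which trivially works, while $\beta_1 \cdots \beta_{w-1}$ is also empty. For the inductive step, fix $r \in R(n,w)$ with decomposition $r = ((t,m),l,u)$ as in Theorem~\ref{thm:rec-decomp-red}, so $r = (t_0,\ldots,t_{l-1},0,u_0,\ldots,u_{w-l-1})$ with each $u_i \in \{0,1\}$ and $t_i > 0$. I would track the intermediate configuration obtained after applying, in order, $Path_{n-k-lm,l-1}(t')$ (where $t' = (t_0-m,\ldots,t_{l-1}-m)$ and $k = \#\{i : u_i = 1\}$) — wait, here one must be careful about the order: in the definition $Path_{n,w}(r) = seq_0(u) \cdot seq_1(l,m) \cdot Path_{n-k-lm,l-1}(t)$, the recursive call comes \emph{last}, so the sequence is read left to right as: first place the "dust" grains via $seq_0(u)$, then build the $m$ skew layers via $seq_1(l,m)$, then recursively build the prefix. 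I would verify that, applied to $(n-w(w+1)/2,0,\ldots,0)$, the sequence $seq_0(u)$ correctly raises exactly the positions $i_j > l$ with $u_{i_j-l-1}=1$ from $0$ to $1$: each $\alpha_{i_j} = 0,1,\ldots,i_j-1$ is a "cascade" of $\mbox{FALL}'$ applications moving one grain from column $0$ rightward to column $i_j$, and one must check the validity condition $r_p \geq r_{p+1}+1$ holds at each step — this uses that before $\alpha_{i_j}$ the columns $0,\ldots,i_j$ still form (a prefix of) a configuration dominated appropriately, which is where condition (iii) and the ordering $i_1 > i_2 > \cdots$ matter (doing larger indices first keeps the lower columns full enough). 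Then $seq_1(l,m) = (\alpha_{l-1}\cdots\alpha_1)^m$ lifts positions $1,\ldots,l-1$ each by $m$; again one checks validity of each elementary $\mbox{FALL}'$ step.

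The main obstacle I anticipate is precisely the \emph{validity bookkeeping}: showing that at every single elementary $\mbox{FALL}'$ application in $seq_0(u)$ and in each of the $m$ passes of $seq_1(l,m)$, the precondition $r_l \geq r_{l+1}+1$ is met. This requires carefully describing the intermediate reduced form after each partial application and invoking condition (iii) of Proposition~\ref{prop:reduceform-chara} (which says $r_i \geq r_j - 1$ for $i < j$, i.e. the reduced form never increases by more than $1$ as we move right) together with the structural facts that $t_i > 0$ and the $u_i \in \{0,1\}$. I would handle this by an inner induction on the number of passes in $seq_1$ and, within $seq_0$, an induction on $j$ showing the invariant "after processing $\alpha_{i_1},\ldots,\alpha_{i_{j}}$, the configuration is the reduced form of a genuine SPM configuration of the appropriate weight with the first $l$ columns untouched." Once the validity is established, the recursive call handles the prefix $t$ by the induction hypothesis (noting $t'$ lies in $R(n',l-1)$ for the appropriate $n'$, and that building $t'$ first and then adding the $m$ uniform layers yields $t = (t_0,\ldots,t_{l-1})$ — here one checks the $m$ layers commute past the recursive construction, or simply reorders the argument so the recursive part is genuinely executed first in the final configuration). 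Finally, summing up the weight contributions ($k$ from $seq_0$, $lm$ from $seq_1$, and $n' = n - k - lm - $ the socle contribution from the recursion) confirms the endpoint is exactly $r$, and pushing the whole sequence through the commuting diagram $w$ times converts the $\mbox{FALL}'$-path into a $\mbox{FALL}$-path, completing the proof.
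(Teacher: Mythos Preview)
Your proposal is correct and follows essentially the same approach as the paper: induction on $w$, using the recursive decomposition of Theorem~\ref{thm:rec-decomp-red} and the commuting diagram between $\mbox{FALL}$ and $\mbox{FALL}'$ to reduce to reduced forms. The paper's own proof is in fact considerably terser than yours---it simply states the induction, invokes the decomposition $red_w(s) = (l,u,m), r'$, notes that $r'$ has width $l-1 < w$ so the induction hypothesis applies, and declares the rest ``easy to verify.'' Your explicit tracking of intermediate configurations and of the validity preconditions for each elementary $\mbox{FALL}'$ step (in $seq_0(u)$, in each pass of $seq_1(l,m)$, and in the shifted recursive call) fills in exactly the details the paper omits; in particular your observation that the recursive call acts on columns $0,\ldots,l-1$ uniformly shifted by $m$, so validity transfers verbatim from the width-$(l-1)$ case, is the key point hidden behind the paper's ``easy to verify.''
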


\begin{proof}
We perform an induction on $w$. The base case $w = 0$ is trivial. To proceed by induction, we suppose that the proposition is true for any $s \in \mbox{SPM}(n)$ having width smaller than $w$. By combining  the notation in this proposition and the result of Theorem \ref{thm:rec-decomp-red}, we have $red_w(s) = (l, u, m), r'$. We observe that $r'$ is a reduced form of a certain configuration $s'$ with width $l < w$. It is easy to verify that the sequence $Path_{n,w}$ (by its recursive definition and our induction hypothesis) constructs the correct reduced form, which completes construction of the socle obtained by the successive applications of the $\beta_i$'s.
\end{proof}

This proposition can be seen as a constructive proof of one direction of Theorem \ref{thm:spm-chara}. We should be aware that this construction is not unique. For instance, in the recursive definition of $Path_{n,w}$, by exchanging the order of $seq_0(u)$ and $seq_1(l,m)$, we can get a different valid construction. From a physical point of view, it may be interesting to study the number of generating sequences of configurations in order to have better understanding over the generic evolution of an \mbox{SPM} model.

\subsection{Recursive formula for $|\mbox{SPM}(n)|$} \label{sect:counting-spm}

The recursive structure of $\mbox{SPM}(n)$ described by Theorem \ref{thm:rec-decomp-red} can be used to give a counting formula for $\mbox{SPM}(n)$.

We define $c(p,w) = |R(p+w(w+1)/2,w)|$. The following proposition follows directly from the definitions of $c(p,w)$ and $R(p,w)$.

\begin{prop} \label{prop:spm-as-sum}
For a natural number $n$, we have
\[ |\mbox{SPM}(n)| = \sum_{\substack{w \geq 1 \\ w(w+1) \leq 2n}} c\left( n-\frac{w(w+1)}{2},w \right) . \]
\end{prop}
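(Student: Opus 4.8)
The plan is to combine two facts that are already in place: that the sets $\mbox{SPM}(n,w)$ partition $\mbox{SPM}(n)$ according to staircase width (Section~\ref{sect:staircase-bases}), and that for each $w$ the map $red_w$ is a bijection from $\mbox{SPM}(n,w)$ onto $R(n,w)$. Granting these, one immediately gets $|\mbox{SPM}(n)| = \sum_{w \ge 0} |\mbox{SPM}(n,w)| = \sum_{w \ge 0} |R(n,w)|$, a sum with only finitely many nonzero terms, and all that remains is to identify each $|R(n,w)|$ with $c\!\left(n - \frac{w(w+1)}{2}, w\right)$ and to pin down which indices $w$ actually contribute.

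For the identification, I would simply unwind the definition $c(p,w) = |R(p + \frac{w(w+1)}{2}, w)|$ with the substitution $p = n - \frac{w(w+1)}{2}$, which gives at once $c\!\left(n - \frac{w(w+1)}{2}, w\right) = |R(n,w)| = |\mbox{SPM}(n,w)|$.

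For the summation range, I would argue that a term can be nonzero only for $w$ with $w \ge 1$ and $w(w+1) \le 2n$. The upper bound is quickest via condition \textbf{(i)} of Proposition~\ref{prop:reduceform-chara}: a tuple in $R(n,w)$ has natural-number components summing to $n - \frac{w(w+1)}{2}$, which is impossible once $w(w+1) > 2n$; alternatively it follows from Propositions~\ref{thm:bw-monotone} and~\ref{thm:fixpoint}, since $sw(t) \le sw(\phi(n))$ forces $sw(t)(sw(t)+1) \le 2n$. The lower bound $w \ge 1$ holds because, for $n \ge 1$, the only configuration of staircase width $0$ would be the all-zero partition, which is not in $\mbox{SPM}(n)$; so the $w=0$ term is empty and may be dropped.

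Assembling these observations yields the stated identity. I do not expect any genuine obstacle: the only point requiring a little care is the bookkeeping of the index set (the exclusion of $w=0$, valid for $n \ge 1$, and the harmless restriction to those $w$ with $w(w+1) \le 2n$), while everything else is a direct translation through the bijection $red_w$ and the definition of $c$.
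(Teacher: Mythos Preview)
Your argument is correct and is exactly the approach the paper has in mind: the paper merely states that the proposition ``follows directly from the definitions of $c(p,w)$ and $R(p,w)$,'' and your write-up spells out precisely those definitions together with the partition of $\mbox{SPM}(n)$ into the sets $\mbox{SPM}(n,w)$ established in Section~\ref{sect:staircase-bases}. Your bookkeeping on the summation range (including the caveat that dropping $w=0$ is valid for $n\geq 1$) is the only part that goes beyond what the paper says, and it is handled correctly.
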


The reason we choose $c(p,w) = |R(p+w(w+1)/2,w)|$ is that for all $r \in R(p+w(w+1)/2,w)$, we have $\sum_i r_i = p$. Here $p$ represents the number of grains located ``above'' the socle $s(w)=(w,w-1,\ldots,1,0)$. As a consequence of Theorem \ref{thm:rec-decomp-red}, we have the following recurrence for $c(p,w)$.

\begin{thm} \label{thm:rec-count-red}
The value of $c(p,w)$ is uniquely determined by the following recurrence. For $w \geq 0$, we have $c(0,w)=1$. For $p \neq 0$, we have $c(p,0)=0$. For the remaining cases,
\[ c(p,w) =  \binom{w}{p} + \sum_{l=1}^{w} \sum_{i=0}^{\min(w-l,p-l)} \sum_{m=1}^{\lfloor \frac{p-i}{l} \rfloor} \bigg[ \binom{w-l}{i}c(p-i-lm, l-1) \bigg]. \]
\end{thm}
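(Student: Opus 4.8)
The plan is to translate the recursive decomposition of Theorem~\ref{thm:rec-decomp-red} directly into a counting identity, treating separately the two cases of the decomposition $r = (l, u, m), r'$: namely whether the tuple $t = (t_0, \ldots, t_{l-1})$ is empty or not. First I would handle the boundary conditions. When $p = 0$, the only reduced form with part-sum $0$ is the all-zero tuple $(0, \ldots, 0)$, which indeed satisfies conditions (ii) and (iii) of Proposition~\ref{prop:reduceform-chara}, so $c(0, w) = 1$ for all $w \geq 0$. When $w = 0$, a reduced form is a single component which, by condition (ii), must be $0$; hence its part-sum is $0$, giving $c(p, 0) = 0$ for $p \neq 0$. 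These are immediate.

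For the main recurrence, fix $p \neq 0$ and $w \geq 1$, and let $r \in R(p + w(w+1)/2, w)$, so $\sum_i r_i = p$. Apply Theorem~\ref{thm:rec-decomp-red} to write $r = (t_0, \ldots, t_{l-1}, 0, u_0, \ldots, u_{w-l-1})$ uniquely. The term $\binom{w}{p}$ counts the case $t = ()$, i.e. $l = 0$: then $r = (0, u_0, \ldots, u_{w-1})$ is a length-$w$ vector of $0$'s and $1$'s (by the decomposition, plus the fact that condition (iii) forces the $u_i$ into $\{0,1\}$ once the first component is $0$), and its part-sum being $p$ means exactly $p$ of the $w$ entries $u_i$ equal $1$ — hence $\binom{w}{p}$ choices, and this is consistent with part (ii) being satisfied by the leading $0$. (One should note $\binom{w}{p} = 0$ automatically when $p > w$, so no extra constraint is needed.) For the case $t \neq ()$, we have $1 \leq l \leq w$. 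The block $u$ has length $w - l$ and consists of $0$'s and $1$'s; if $i$ of these entries are $1$, there are $\binom{w-l}{i}$ choices, contributing a part-sum of $i$ with $0 \leq i \leq w - l$. The skew strip of thickness $m \geq 1$ below positions $0, \ldots, l-1$ contributes $lm$ to the part-sum. Finally $r'$ lies in $R(n', l-1)$ for the appropriate $n'$; by definition of $c$, writing the part-sum of $r'$ as $p' := p - i - lm$, the number of choices for $r'$ is $c(p - i - lm, l-1)$. Multiplying and summing over all valid $l, i, m$ gives the stated formula; the constraints $i \leq p - l$ (since $p' = p - i - lm \geq 0$ and $m \geq 1$ forces $i \leq p - l$) and $1 \leq m \leq \lfloor (p-i)/l \rfloor$ are exactly what keep $p' \geq 0$, and $c(p', l-1)$ vanishes for $p' < 0$ anyway so harmless over-counting of ranges is not an issue.

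What remains is to check that this case split is a genuine partition of $R(p + w(w+1)/2, w)$ with no double counting, and that every tuple $(t, u, m, r')$ satisfying the listed constraints reassembles into a bona fide element of $R(p+w(w+1)/2, w)$. Uniqueness of the decomposition — which is the content of Theorem~\ref{thm:rec-decomp-red}, with $l$ chosen as the \emph{minimal} index where $r$ vanishes — guarantees injectivity of the map $r \mapsto (l, u, m, r')$; conversely, given such data one sets $t_j = r'_j + m$ and checks that $(t_0, \ldots, t_{l-1}, 0, u_0, \ldots, u_{w-l-1})$ has all $t_j > 0$ (because $m \geq 1$), has its first zero exactly at position $l$ (the $t_j$ are positive, so the leading block has no zero), and satisfies condition (iii) globally. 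This last point is the only place where a small argument is needed: condition (iii) within $t$ follows from its holding in $r'$ (it is invariant under adding a constant $m$ to every part) and within $u$ from the $u_i \in \{0,1\}$; the cross-block inequalities $t_j \geq 0 - 1$ and $t_j \geq u_{i'} - 1$ hold trivially since $t_j \geq 1$, and $0 \geq u_{i'} - 1$ holds since $u_{i'} \leq 1$. So every reassembled tuple satisfies (ii) and (iii), hence lies in the appropriate $R$, and its weight parameter is $p + w(w+1)/2$ by the arithmetic of the part-sums. I expect the main obstacle to be purely bookkeeping: making sure the summation ranges for $l$, $i$, and $m$ exactly match the support of the nonzero summands (in particular that $\binom{w-l}{i}$ and $c(p-i-lm, l-1)$ both vanish outside the written ranges, so that the stated finite sum equals the full count), rather than any conceptual difficulty — the structural work has already been done in Theorem~\ref{thm:rec-decomp-red} and Proposition~\ref{prop:reduceform-chara}.
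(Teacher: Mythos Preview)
Your proof is correct and follows essentially the same approach as the paper: both derive the recurrence directly from the unique decomposition of Theorem~\ref{thm:rec-decomp-red}, interpreting the summation indices $l$, $i$, $m$ as the first-zero position, the number of $1$'s in $u$, and the minimum of $t$, respectively, with the term $\binom{w}{p}$ accounting for the case $l=0$. Your argument is in fact more detailed than the paper's, which leaves the bijectivity of the reassembly (in particular the cross-block verification of condition~(iii)) implicit.
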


\begin{proof}
This recurrence comes directly from the decomposition of reduced forms $r=((t,m),l,u)$. The base cases of the recurrence can be easily verified. The summation index $l$ (resp. $m$) corresponds to the integer $l$ (resp. $m$) in the decomposition. The summation index $i$ stands for the number of parts of $u$ equal to $1$. Binomial coefficients arise by taking the number of all possible sequences $u$ of $0$'s and $1$'s having exactly $i$ parts equal to $1$ (and all other parts equal to $0$, as in Theorem \ref{thm:rec-decomp-red}). The special case $l=0$ is treated in the first term on the right hand side. This is the case where $t$ is empty. The $p$ grains in $r$ can be only placed on $w$ columns in $u$, and at most one grain can be placed on each column. Therefore, the first term is non zero only when $0 \leq p \leq w$.
\end{proof}

Now we want to evaluate the complexity of computing $|\mbox{SPM}(n)|$ using this recursive formula.

All binomial coefficients $\binom{a}{b}$ needed to calculate $c(n,w)$ verify $a \leq w$. By Proposition \ref{prop:bw-upperbound}, we have $w \leq \sqrt{2n}$. Therefore, by memorizing results (in the manner of dynamic programming), we can precalculate all of them using $O(n)$ additions.

In the recurrence for $c(p,w)$, we notice that $m$ cannot exceed $p/l$, therefore for a fixed $l \leq w$, there are at most $p/l$ possibilities for the value of $m$, thus we have $\sum_{l=1}^{w} p/l = p \sum_{l=1}^{w} 1/l = O(p\log(w))$ possible pairs $(l,m)$. Since $1 \leq i \leq w$, in the recurrence for $c(p,w)$ there are at most $O(wp\log(w))$ terms, thus $O(wp\log(w))$ arithmetic operations are needed to calculate each $c(p,w)$, given the value of all $c(p', w')$ with $p' < p$ and $w' < w$. The total number of arithmetic operations for calculating all $c(p,w)$, for all $p \leq n$, is bounded by:

\[ \sum_{p \leq n} \sum_{1 \leq w \leq \sqrt{2n}} wp\log(w) = O(n^{3}\log(n)) \]

It follows from Proposition~\ref{prop:spm-as-sum} that we only need $O(n^{3}\log(n))$ arithmetic operations to compute $|\mbox{SPM}(n)|$. According to \cite{corteel2002enumeration}, we can bound $|\mbox{SPM}(n)|$ by $c^n$ (for a certain constant $c$), thus all coefficients involved have $O(n)$ bits, thus we know that we can compute $|\mbox{SPM}(n)|$ in $O(n^{4} \log^2 n \log \log n)$ time using fast integer multiplication.

As a remark, given the recursive formula for $c(p,w)$, it is straightforward to construct a uniform random generator of $\mbox{SPM}(n)$ by computing all $c(p,w)$ and generate configurations recursively in a uniformly random way using appropriate probabilities computed with $c(p,w)$.

\subsection{A CAT algorithm for $\mbox{SPM}(n)$}

It is clear that the exhaustive generation of $\mbox{SPM}(n)$ reduces to the exhaustive generation of $\mbox{SPM}(n,w)$ with the staircase width $w$ varying from $1$ to $\lfloor \sqrt{2n} \rfloor$, which in turns reduces to the exhaustive generation of reduced forms in $R(n,w)$. The unique decomposition of reduced forms in Theorem~\ref{thm:rec-decomp-red} thus gives a natural way to exhaustively generate reduced forms in a recursive fashion. Essentially our algorithm will be an algorithmic transcription of Proposition~\ref{prop:spm-as-sum} and of Theorem~\ref{thm:rec-count-red}. Algorithm~\ref{algo:cat-detailed} is an example of such a transcription. It should be called initially with $d=0$. The cases on $l$ are for the further complexity analysis.

\begin{algorithm}[!htb]
\caption{Recursive Generation of $R(p+w(w+1)/2,w)$} \label{algo:cat-detailed}
\textbf{Generate}($p,w,d$)

\KwResult{Each call \textsf{yield()} (including those in recursive calls) sees a new $r \in R(p+w(w+1)/2,w)$ in the array $A$ of triplets $(l_i, u_i, m_i)$. Throughout the code, $l$ is the position of first zero, $i$ the number of 1s in $u$ and $m$ the minimum before the first zero.}
\Begin{
	\If{$p=0$}{
		yield(); \Return{}\;
	}
	\tcp{Case $l = 0$}
	\If{$p \leq w$}{
		\ForEach{$u$ of length $w$ with $p$ 1s}{
			$A[d] \gets (0, 0, u)$; yield()\;
		}
	}
	\tcp{Case $l = 1$}
	\For{$i \gets 0$ \KwTo $\min(w-1,p)$}{
		\ForEach{$u$ of length $w-1$ with $i$ 1s}{
			$A[d] \gets (1, u, p-i)$; yield()\;
		}
	}
	\tcp{Case $l \geq 2$}
	\For{$l \gets 2$ \KwTo $w$}{
		\For{$i \gets 0$ \KwTo $\min(w-l,p-l)$}{
			\ForEach{$u$ of length $w-l$ with $i$ 1s}{
				\For{$m \gets 1$ \KwTo $\lfloor \frac{p-i}{l} \rfloor$}{
					$A[d] \gets (l, u, m)$; Generate($p-i-lm,l-1, d+1$)\;
				}
			}
		}
	}
	\Return{}\;
}
\end{algorithm}

We now explain the data structure we use. From the notation in Theorem~\ref{thm:rec-count-red}, the unique decomposition of reduced form gives each reduced form $r$ an expression as a list of triplets $(l_0, u_0, m_0), \ldots, (l_d, u_d, m_d)$ with natural numbers $l_i \geq 0, m_i > 0$ and $(0,1)$-sequences $u_i$, with the condition that the sequence $(l_i)_{0 \leq i \leq d}$ is strictly decreasing. We will adopt this notation for our exhaustive generation algorithm. In the generation, the natural numbers $l_i, m_i$ come naturally from loop indices, and the rest consists of the generation of $u_i$, which are $(0,1)$-sequences, with given length and given weight (\textit{i.e.} total number of 1s). There are various CAT algorithms for generating $(0,1)$-sequences with given length and weight, for example those presented by Knuth in Chapter 7.2.1.3 of \cite{knuth2005art} and by Ruskey in \cite{ruskey1996combinatorial} in Section 4.3. Any one of these methods can be used as a subroutine to generate $u$. Many such algorithms have a linear initialization when the given length is equal to the given weight, and they might fail to be CAT in this special case. However, this can be fixed by using a boolean variable associated to $u_i$ to indicate this special case. Therefore, we can consider all operations in Algorithm~\ref{algo:cat-detailed} to be performed in constant time.

We now analyse the time complexity of Algorithm~\ref{algo:cat-detailed}. According to Section 4.3 in \cite{ruskey1996combinatorial}, we only need to analyse the form of the recursion tree. For each call of \textsf{Generate(p,w,d)}, it is clear that at least one of the generated $\mbox{SPM}$ configurations  is produced immediately in this call, by putting all the $p$ grains into the first column. Therefore, the number of nodes of \textsf{Generate(p,w,d)} in the recursion tree of \textsf{Generate($n-w(w-1)/2$,$w$,$0$)} is bounded by $|\mbox{SPM}(n,w)|$, thus the total number of nodes is bounded by $2|\mbox{SPM}(n,w)|$. We also know that, in Algorithm~\ref{algo:cat-detailed}, the time spent to spawn a child for each node in the recursion tree is bounded by a constant. It is then immediate that our algorithm is CAT for $\mbox{SPM}(n,w)$, thus also CAT for $\mbox{SPM}(n)$.

We now analyse the space complexity of Algorithm~\ref{algo:cat-detailed}, first expressed in terms of memory cells, then in terms of bits. We notice that generating $u$ of length $k$ uses $O(k)$ extra memory. As the total length of all $u$ in a recursive call is bounded by $w$, we know that this part of memory consumption is $O(w)$. Secondly, $w$ decreases at each recursive call, therefore the recursion depth is $O(w)$. Since \textsf{Generate(p,w,d)} only uses a constant number of scalar variables besides the array $A$, the total stack memory consumption is $O(w)$. Adding the memory needed for the array $A$ and for generating $u$, the total number of memory cells used in \textsf{Generate(p,w,d)} is $O(w)$. As the value of each memory cell is bounded by $\max(n,w)$, space complexity of generating $\mbox{SPM}(n,w)$ is $O(w\log(\max(n,w)))$ bits. For the generation of $\mbox{SPM}(n)$, a simple reuse gives a total space complexity of $O(\sqrt{n}\log(n))$ bits.

\section{Generalization to ice pile model}

We will now generalize previous results to the Ice Pile Models $\mbox{IPM}_k(n)$, using the same terminology as in previous sections.

For $\mbox{IPM}_k(n)$, we define the following staircases for $w>0$ and $1 \leq l \leq k$:
\[ s(w,l) = (\underbrace{w,\ldots,w}_{l}, \underbrace{w-1,\ldots,w-1}_{k}, \ldots, \underbrace{1,\ldots,1}_{k}). \]
In Figure~\ref{fig:bases-ipm} several examples of staircases for $k=2$ are presented.

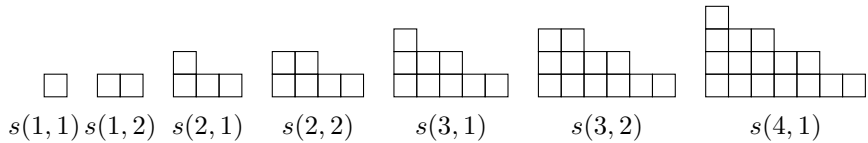
\begin{figure}[!htb]
\centering
\begin{tikzpicture}
\def \mysqr{rectangle +(0.300000, 0.300000)}

\begin{scope}[xshift=0cm]
\draw (0.000,0) \mysqr;
\node at (0.0, -0.4){$s(1,1)$};
\end{scope}

\begin{scope}[xshift=0.7cm]
\draw (0.000,0) \mysqr;
\draw (0.300,0) \mysqr;
\node at (0.3, -0.4){$s(1,2)$};
\end{scope}

\begin{scope}[xshift=1.7cm]
\foreach \y in {0,0.300} \draw (0.000, \y) \mysqr; 
\draw (0.300,0) \mysqr;
\draw (0.600,0) \mysqr;
\node at (0.45, -0.4){$s(2,1)$};
\end{scope}

\begin{scope}[xshift=3cm]
\foreach \y in {0,0.300} \draw (0.000, \y) \mysqr; 
\foreach \y in {0,0.300} \draw (0.300, \y) \mysqr; 
\draw (0.600,0) \mysqr;
\draw (0.900,0) \mysqr;
\node at (0.6, -0.4){$s(2,2)$};
\end{scope}

\begin{scope}[xshift=4.6cm]
\foreach \y in {0,0.300, ..., 0.600} \draw (0.000, \y) \mysqr; 
\foreach \y in {0,0.300} \draw (0.300, \y) \mysqr; 
\foreach \y in {0,0.300} \draw (0.600, \y) \mysqr; 
\draw (0.900,0) \mysqr;
\draw (1.200,0) \mysqr;
\node at (0.75, -0.4){$s(3,1)$};
\end{scope}

\begin{scope}[xshift=6.5cm]
\foreach \y in {0,0.300, ..., 0.600} \draw (0.000, \y) \mysqr; 
\foreach \y in {0,0.300, ..., 0.600} \draw (0.300, \y) \mysqr; 
\foreach \y in {0,0.300} \draw (0.600, \y) \mysqr; 
\foreach \y in {0,0.300} \draw (0.900, \y) \mysqr; 
\draw (1.200,0) \mysqr;
\draw (1.500,0) \mysqr;
\node at (0.9, -0.4){$s(3,2)$};
\end{scope}

\begin{scope}[xshift=8.7cm]
\foreach \y in {0,0.300, ..., 1.200} \draw (0.000, \y) \mysqr; 
\foreach \y in {0,0.300, ..., 0.600} \draw (0.300, \y) \mysqr; 
\foreach \y in {0,0.300, ..., 0.600} \draw (0.600, \y) \mysqr; 
\foreach \y in {0,0.300} \draw (0.900, \y) \mysqr; 
\foreach \y in {0,0.300} \draw (1.200, \y) \mysqr; 
\draw (1.500,0) \mysqr;
\draw (1.800,0) \mysqr;
\node at (1.05, -0.4){$s(4,1)$};
\end{scope}
\end{tikzpicture}
\caption{The first staircase bases of $\mbox{IPM}_2$} \label{fig:bases-ipm}
\end{figure}

These staircases are clearly stable by all rules of $\mbox{IPM}_k(n)$. We define analogously the staircase basis $B_k=\{ s(w,l) \mid w > 0, 1 \leq l \leq k \}$. It is clear that $s(w,l) \leq s(w',l')$ (sequence order) if and only if $(w,l)$ is not larger than $(w',l')$ in lexicographical order. We can see that $\leq$ is a linear order over $B_k$. For $t \in \mbox{IPM}_k(n)$, we define $sw(t)=(w,l)$ such that $s(w,l) \leq t$, and for any $s \in B$ with $s \leq t$, we have $s \leq s(w,l)$. For example, for $t=(8,8,5,5) \in \mbox{IPM}_2(26)$, we have $sw(t)=(2,2)$ and $s(sw(t)) = (2,2,1,1)$, while, if the same $t$ is seen as a configuration of $\mbox{IPM}_5(26)$, we have $sw(t) = (1,4)$ and $s(sw(t)) = (1,1,1,1)$. As before, with respect to the sequence order, $s(sw(t))$ is the largest staircase among all those that are smaller than $t$. It is clear that we cannot apply $\mbox{SLIDE}_k$ on any $s(w,l)$. We also have the following analogue of Proposition \ref{thm:bw-monotone}.

\begin{thm}
For $t,t' \in \mbox{IPM}_k(n)$ and $t\to t'$, we have $s(sw(t)) \leq s(sw(t'))$.
\end{thm}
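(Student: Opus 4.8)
The plan is to reduce the statement to the following lemma, which abstracts the mechanism behind Proposition~\ref{thm:bw-monotone}: \emph{if $s \in B_k$ satisfies $s \leq t$ in the sequence order and $t \to t'$ is a single application of $\mbox{FALL}$ or $\mbox{SLIDE}_k$, then $s \leq t'$.} Granting this, the theorem follows at once. Indeed, $s(sw(t)) \in B_k$ and $s(sw(t)) \leq t$ by the definition of $sw(t)$; the lemma then gives $s(sw(t)) \leq t'$, and since $s(sw(t'))$ is, again by definition, the $\leq$-largest element of $B_k$ lying below $t'$, we conclude $s(sw(t)) \leq s(sw(t'))$.

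To prove the lemma, I would argue by contradiction, assuming $s \leq t$ but $s \not\leq t'$, so that $s_i > t'_i$ for some index $i$ with $s_i \leq t_i$. Both $\mbox{FALL}$ and $\mbox{SLIDE}_k$ remove exactly one grain from the source column $l$ and add it to a column strictly to the right of $l$, leaving all other columns unchanged. Since $t'_i < t_i$, the column $i$ must be the one that lost a grain, hence $i = l$ and $t'_i = t_i - 1$; combined with $t_i - 1 < s_i \leq t_i$ this forces $s_i = t_i$. It remains to derive a contradiction for each of the two possible rules.

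If $t \to t'$ is a $\mbox{FALL}$ at column $i$, then $t_i \geq t_{i+1} + 2$. But every element of $B_k$ is a fixed point of $\mbox{FALL}$, so $s_i \leq s_{i+1} + 1$; together with $t_{i+1} \geq s_{i+1}$ and $t_i = s_i$ this gives $t_i \leq t_{i+1} + 1$, a contradiction (this is essentially the argument of Proposition~\ref{thm:bw-monotone}). If $t \to t'$ is a $\mbox{SLIDE}_k$ at column $i$ with parameter $k' < k$ (necessarily with $k' \geq 1$), then $t_i - 1 = t_{i+1} = \ldots = t_{i+k'} = t_{i+k'+1} + 1$, so in particular $t_{i+1} = t_i - 1$ and $t_{i+k'+1} = t_i - 2$. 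From $t_{i+1} = s_i - 1$, $t_{i+1} \geq s_{i+1}$ and $s_i - s_{i+1} \leq 1$ we obtain $s_{i+1} = s_i - 1$; hence the entries of $s$ strictly decrease between positions $i$ and $i+1$, and column $i+1$ begins a new maximal run of equal entries, of height $s_i - 1$. If $s_i - 1 \geq 1$, this run is not the topmost run of $s$ (it starts at a positive index), so by the explicit shape of the elements of $B_k$ it consists of exactly $k$ columns; since $1 \leq k'+1 \leq k$ we get $s_{i+k'+1} = s_i - 1$, and therefore $t_{i+k'+1} \geq s_i - 1 > s_i - 2$, contradicting the $\mbox{SLIDE}_k$ pattern. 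The remaining case $s_i \leq 1$ is immediate, since then the pattern would require a column of height $t_i - 2 < 0$ (or $t_i - 1 < 0$ when $s_i = 0$), which is impossible.

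I expect the only genuine difficulty to be the $\mbox{SLIDE}_k$ case. Unlike $\mbox{FALL}$, stability of $s$ under $\mbox{SLIDE}_k$ is not a condition on two consecutive columns, and the right object to exploit is not that directly but the structural fact that, immediately to the right of the source column $i$, the staircase $s$ has a run of length exactly $k$; this run covers all of positions $i+1, \ldots, i+k$, hence in particular position $i+k'+1$, which pins the height there and rules out the ``drop of $2$'' demanded by $\mbox{SLIDE}_k$. Once this is noticed, the remaining bookkeeping mirrors the $\mbox{SPM}$ case.
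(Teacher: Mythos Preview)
Your proof is correct and follows essentially the same route as the paper's: reduce to showing $b = s(sw(t)) \leq t'$, observe that only the source column can fail, and then use the fact that a staircase in $B_k$ drops by at most one over any span of at most $k$ columns to force $t'_c \geq b_c$. The only difference is cosmetic: the paper handles $\mbox{FALL}$ and $\mbox{SLIDE}_k$ in one stroke via the single inequality $b_{c+p} \geq b_c - 1$ for the target column $c+p$, whereas you split into cases and invoke the run-of-length-$k$ structure explicitly to reach the same contradiction.
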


\begin{proof}
Set $b = s(sw(t))$. By definition of $sw$, it suffices to prove that $b \leq t'$. Suppose that $t'$ is obtained from $t$ by applying the rule $\mbox{SLIDE}_k$ on column $c$. Therefore, $t_c = t'_c + 1$; for some suitable $p<k$, we have $t_{c+p} = t'_{c+p} - 1$ and $t_{c+p} \leq t_c - 2$; and for all $j \not \in \{c, c+p\}$, we have $t_j = t'_j$. By definition, $b \leq t$. The only column that may prevent $b \leq t'$ is column $c$. However, $b_{c+p} \geq b_{c} - 1$ for any $p < k$ (by definition of $b$), thus $t_{c+p} \geq b_{c+p} \geq b_{c} - 1 $. Since we also have $t'_c = t_c - 1 \geq t_{c+p} +1$, we have $t'_c \geq b_{c}$, which concludes the proof.
\end{proof}

We now propose a few definitions similar to those settled for SPM.

\begin{defi}{\textbf{Staircase width and reduced form for \mbox{IPM}}} \label{def:misc-ipm}

We define $\mbox{IPM}_k(n,w,l)$ as the subset of $\mbox{IPM}_k(n)$ of all elements with staircase width $(w,l)$, or formally $\mbox{IPM}_k(n,w,l) = \{ s \in \mbox{IPM}_k(n) | sw(s) = (w,l) \}$. We can see that the family $\{\mbox{IPM}_k(n,w,l)\}_{w, l}$ is a partition of the set $\mbox{IPM}_k(n)$.

For $s \in \mbox{IPM}_k(n,w,l)$, we say that $red_{(w,l)}(s) = (s_i - b(sw(s))_i)_{i \geq 0}$ is its \emph{reduced form}. By definition, every reduced form is a sequence of natural numbers. We denote by $R_k(n,w,l)$ the set of reduced forms of elements in $\mbox{IPM}_k(n,w,l)$.
\end{defi}

Some examples of $\mbox{IPM}$ configurations and their reduced forms are illustrated in Figure~\ref{fig:red-ipm}.

\begin{figure}[!htb]
\centering
\begin{tikzpicture}
\def \mysqr{rectangle +(0.300000, 0.300000)}

\begin{scope}[xshift=0cm]
\foreach \y in {0,0.300, ..., 2.100} \draw (0.000, \y) \mysqr; 
\foreach \y in {0,0.300, ..., 2.100} \draw (0.300, \y) \mysqr; 
\foreach \y in {0,0.300, ..., 1.200} \draw (0.600, \y) \mysqr; 
\foreach \y in {0,0.300, ..., 1.200} \draw (0.900, \y) \mysqr; 
\foreach \y in {0,0.300} \filldraw[fill=black!50] (0.000, \y) \mysqr; 
\foreach \y in {0,0.300} \filldraw[fill=black!50] (0.300, \y) \mysqr; 
\filldraw[fill=black!50] (0.600,0) \mysqr;
\filldraw[fill=black!50] (0.900,0) \mysqr;
\node[text width=6cm, align=center] at (0.6, -0.8) {$s = (7,7,4,4,0,\ldots)$ \\ Basis: $s(2,2)$ \\ $red_{(2,2)}(s)=(5,5,3,3,0,\ldots)$};
\end{scope}

\begin{scope}[xshift=5.5cm]
\foreach \y in {0,0.300, ..., 1.800} \draw (0.000, \y) \mysqr; 
\foreach \y in {0,0.300, ..., 1.200} \draw (0.300, \y) \mysqr; 
\foreach \y in {0,0.300, ..., 1.200} \draw (0.600, \y) \mysqr; 
\foreach \y in {0,0.300,0.6} \draw (0.900, \y) \mysqr; 
\foreach \y in {0,0.300} \draw (1.200, \y) \mysqr; 
\draw (1.500, 0) \mysqr; 
\draw (1.800,0) \mysqr;
\draw (2.100,0) \mysqr;
\foreach \y in {0,0.300, ..., 1.200} \filldraw[fill=black!50] (0.000, \y) \mysqr; 
\foreach \y in {0,0.300, ..., 0.600} \filldraw[fill=black!50] (0.300, \y) \mysqr; 
\foreach \y in {0,0.300, ..., 0.600} \filldraw[fill=black!50] (0.600, \y) \mysqr; 
\foreach \y in {0,0.300} \filldraw[fill=black!50] (0.900, \y) \mysqr; 
\foreach \y in {0,0.300} \filldraw[fill=black!50] (1.200, \y) \mysqr; 
\filldraw[fill=black!50] (1.500,0) \mysqr;
\filldraw[fill=black!50] (1.800,0) \mysqr;
\node[text width=7cm, align=center] at (1.05, -0.8) {$s = (6,4,4,3,2,1,1,1,0,\ldots)$ \\ Basis: $s(4,1)$ \\ $red_{(4,1)}(s)=(2,1,1,1,0,0,0,1,0,\ldots)$};
\end{scope}

\end{tikzpicture}
\caption{Some examples of reduced form in $\mbox{IPM}_2$} \label{fig:red-ipm}
\end{figure}
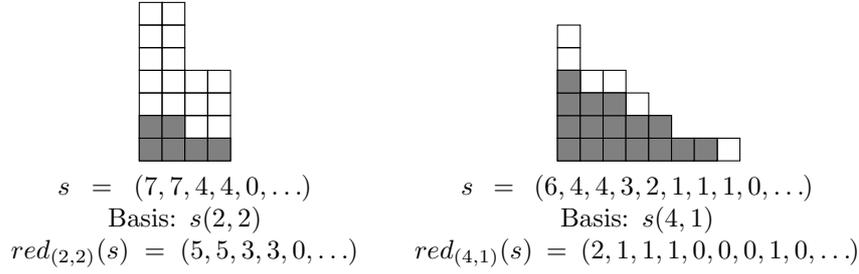

To obtain analogues of the decomposition theorem, we start from the characterization of elements in $\mbox{IPM}_k(n)$. The following characterization of $\mbox{IPM}(n)$ is first given in \cite{goles2002sandpiles}, then in \cite{massazza2010ipm} it is used to give an exhaustive generation algorithm for $\mbox{IPM}(n)$. We adapt the following notations from \cite{massazza2010ipm}. We denote by $p^{[n]}$ the sequence $(p, \ldots, p)$ of $n$ elements equals to $p$ and we recall that the concatenation operation of two sequences is denoted by $\cdot$.

\begin{thm} \label{thm:ipm-chara}
A partition $s$ is in $\mbox{IPM}_k(n)$ for a certain $n$ if and only if it does not contain the following forbidden patterns (for $p>0$ and $h>1$):
\begin{itemize}
\item $p^{[k+2]}$
\item $(p+1)^{[k+1]} \cdot p^{[k+1]}$
\item $(p+h)^{[k+1]} \cdot \prod_{i=1}^{h-1} (p+h-i)^{[k]} \cdot p^{[k+1]}$
\end{itemize}
\end{thm}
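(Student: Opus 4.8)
The plan is to prove the two implications separately; this characterization is already established in \cite{goles2002sandpiles}, and the route sketched here parallels the constructive proof of one direction of Theorem~\ref{thm:spm-chara} obtained via $Path_{n,w}$ in Section~\ref{sect:rec-struct}. It is convenient to keep in mind the potential $\Pi(s)=\sum_{i\ge 0} i\,s_i$: every move sends a grain strictly to the right (by one column for $\mbox{FALL}$, by at least two for $\mbox{SLIDE}_k$), so $\Pi$ strictly increases along any derivation, and since $\Pi$ is bounded above on $\mbox{Part}(n)$, all derivations are finite.

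For the ``only if'' direction I would prove the following claim: if $a\to b$ is a single application of $\mbox{FALL}$ or $\mbox{SLIDE}_k$ and $b$ contains one of the three patterns, then $a$ already contains a forbidden pattern. Granting this, since the initial configuration $(n,0,\ldots)$ contains no forbidden pattern (all patterns require height $p>0$), an induction on the length of a derivation shows that every $s\in\mbox{IPM}_k(n)$ avoids all three patterns. The claim is proved by a case analysis on the type of the pattern occurring in $b$ and on the position of the columns modified by the move relative to that occurrence: when they are disjoint, the same occurrence sits in $a$; otherwise, using that $a$ is weakly decreasing and that the move lowered one column by one and raised a later column by one, one either contradicts the monotonicity of $a$ or locates in $a$ an occurrence of a (possibly different) forbidden pattern that is ``one unit taller''. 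The $\mbox{FALL}$ subcases are short; the $\mbox{SLIDE}_k$ subcases are the bulk of the work, because such a move spans up to $k+1$ columns and creates a plateau of length up to $k+1$, so one must track carefully how that plateau abuts its neighbours in $a$ --- this is particularly delicate for the third ``staircase of plateaux'' pattern.

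For the ``if'' direction I would mirror the SPM treatment. Given $s$ avoiding the three patterns, set $sw(s)=(w,l)$ and pass to the reduced form $red_{(w,l)}(s)$ of Definition~\ref{def:misc-ipm}. First build the staircase socle $s(w,l)$ from $(n,0,\ldots)$ by an explicit sequence of $\mbox{FALL}$ and $\mbox{SLIDE}_k$ moves, generalizing the sequences $\beta_1\cdot\ldots\cdot\beta_{w-1}$ used for $s(w)$; then, working with reduced forms and the rules they induce, assemble $red_{(w,l)}(s)$ on top of the socle. The forbidden-pattern hypothesis translates, as in Proposition~\ref{prop:reduceform-chara}, into local inequalities bounding each part of the reduced form in terms of the preceding ones, and a decomposition theorem for IPM reduced forms analogous to Theorem~\ref{thm:rec-decomp-red} lets one peel off, level by level, a ``dust'' part --- now with heights in a bounded range depending on $k$ rather than in $\{0,1\}$ --- realised by $\mbox{SLIDE}_k$ moves, together with a skewed strip realised by $\mbox{FALL}$ moves, recursing on a reduced form of smaller staircase width.

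A shorter alternative for this direction is an induction on $\Pi(s)$: if $s$ avoids all patterns and $s\ne(n,0,\ldots)$, exhibit a ``reverse move'' (push one grain one or more columns to the left, inverting a $\mbox{FALL}$ or a $\mbox{SLIDE}_k$) yielding a partition $s^-$ that still avoids the patterns and has $\Pi(s^-)<\Pi(s)$, then invoke the induction hypothesis. The main obstacle in either version of this direction --- and the precise place where one uses that the three patterns are exactly the minimal obstructions, not merely necessary ones --- is to show that a pattern-avoiding non-initial configuration always admits such a reverse move, intuitively at the rightmost spot where a grain can be pushed back without creating a plateau of forbidden length; this is the crux of the argument.
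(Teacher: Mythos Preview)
The paper does not prove this theorem at all: it is quoted from \cite{goles2002sandpiles} (and its use in \cite{massazza2010ipm} is mentioned), and the paper simply relies on it as a black box. So there is no ``paper's own proof'' to compare your proposal against.

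That said, one warning about your plan. Your first route for the ``if'' direction --- pass to $red_{(w,l)}(s)$, invoke the characterization of IPM reduced forms, and then use the IPM decomposition theorem to build a generating sequence --- is circular in the logical structure of this paper: Proposition~\ref{prop:reducedform-chara-ipm} explicitly uses Theorem~\ref{thm:ipm-chara} in its proof (both directions), and Theorem~\ref{thm:rec-decomp-red-ipm} in turn rests on that proposition. You would need to re-establish those results independently of the forbidden-pattern characterization, which is essentially the same work as proving Theorem~\ref{thm:ipm-chara} directly. (Incidentally, in the IPM decomposition the ``dust'' part $u$ still takes values in $\{0,1\}$, with the $1$'s allowed only at positions $\equiv -1 \pmod k$; it is not a general bounded-height sequence.) Your alternative route --- induction on $\Pi(s)$ via a reverse move at the rightmost admissible position --- avoids this circularity and is the standard way these characterizations are proved; the case analysis you flag as the crux is indeed where all the content lies.
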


Using this characterization, we will prove an analogue of Proposition \ref{prop:bw-width} for the ice pile model.

\begin{lem} \label{lem:bw-seq-order-ipm}
The largest (in the sequence order $\leq$) $\mbox{IPM}_k$ configuration $s$ with $s_0 \leq w$ is $w \cdot s(w,k)$.
\end{lem}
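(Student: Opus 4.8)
The plan is to mimic closely the proof of Lemma~\ref{lem:bw-seq-order}, replacing the simple pattern bound ``$s_{i+k} \le s_i - k + 1$'' for SPM with the corresponding bound extracted from the IPM forbidden patterns in Theorem~\ref{thm:ipm-chara}. First I would observe that $w \cdot s(w,k)$ is indeed an $\mbox{IPM}_k$ configuration: it is a partition, its first column has height $w$, and by inspection it contains none of the three forbidden patterns (each maximal plateau, except possibly the top one coming from the extra ``$w$'', has length exactly $k$, never $k+1$ or $k+2$, and there is no occurrence of the long staircase-between-two-long-plateaux pattern since no plateau other than the topmost reaches length $k+1$). This handles the ``achievability'' half of the statement.

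For the upper bound, let $s$ be any $\mbox{IPM}_k$ configuration with $s_0 \le w$; I must show $s \le w \cdot s(w,k)$ componentwise, i.e. $s_j \le (w \cdot s(w,k))_j$ for all $j \ge 0$. The key quantitative step is to prove, from the absence of the forbidden patterns, that in any $\mbox{IPM}_k$ configuration one cannot have too many columns at a height close to that of an earlier column; concretely, I would establish that for every $i$ and every value $v < s_i$, the number of indices $j > i$ with $s_j \ge v$ is at most $k(s_i - v) $ ``plus a bounded correction''. More precisely, the natural statement to aim for is: between a column of height $p+h$ and the first subsequent column of height $p$, there are at least $k(h-1)+1$ columns strictly between heights $p$ and $p+h$ forced to drop — but the forbidden-pattern list tells us a \emph{plateau} can have length at most $k$ except for the very first one (length $\le k+1$). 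Combining, I get that starting from height $s_0 \le w$, after the initial plateau of length at most $w$ (itself bounded using the third pattern with the specific staircase shape) the heights decrease at an average rate of $1$ per $k$ columns, never exceeding the profile $s(w,k)$ shifted by the first plateau; this yields exactly $s \le w \cdot s(w,k)$.

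The cleanest way to organize this is probably an induction on $j$, or equivalently a direct argument on suffixes: for each $i$, the suffix $(s_i, s_{i+1}, \dots)$ is again an $\mbox{IPM}_k$ configuration, so it suffices to bound $s_j$ in terms of $s_i$ whenever $s_i$ sits at the ``staircase profile'' value; this is the IPM analogue of the trick used in the proof of Proposition~\ref{prop:bw-width}. One picks the largest index $i$ with $s_i$ still on the descending staircase part of $w \cdot s(w,k)$ and applies the pattern bound to the suffix from $i$; since $s(w,k)$ is itself the extremal stable staircase (it admits no forbidden pattern and is ``as flat as possible''), any configuration whose head is dominated by $w$ must have its tail dominated by the staircase.

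The main obstacle I anticipate is the bookkeeping around the \emph{first} plateau and the $+1$ slack it carries (the pattern $(p+1)^{[k+1]}\cdot p^{[k+1]}$ versus a single long plateau): one must be careful that the extra ``$w$'' prepended to $s(w,k)$ is exactly what absorbs this slack, so that the bound is tight and not off by one. In other words, translating ``no forbidden pattern'' into the clean componentwise inequality $s_j \le (w\cdot s(w,k))_j$ requires a short case analysis on whether $j$ falls inside the leading plateau region or the staircase region, and checking that in each case the relevant forbidden pattern (the $p^{[k+2]}$ one, the $(p+1)^{[k+1]}\cdot p^{[k+1]}$ one, or the general staircase one) is exactly the constraint that prevents $s_j$ from exceeding the claimed value. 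Everything else is routine once that correspondence is pinned down.
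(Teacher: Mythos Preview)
Your overall strategy matches the paper's: show that $w\cdot s(w,k)$ is itself an $\mbox{IPM}_k$ configuration (achievability), and use the forbidden patterns of Theorem~\ref{thm:ipm-chara} to bound every component of an arbitrary $s$ with $s_0\le w$ (upper bound). So there is no genuine gap.

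However, you are working much harder than the paper does, and some of your intermediate statements are muddled (for instance, ``the initial plateau of length at most $w$'' is not right: the first forbidden pattern $p^{[k+2]}$ bounds the plateau length by $k+1$, not by $w$). The paper bypasses all of the case analysis you anticipate by extracting a single clean inequality directly from Theorem~\ref{thm:ipm-chara}: for every $i\ge 0$ and every $p>0$,
\[
s_{i+pk+1}\;\le\; s_i - p.
\]
This is exactly the contrapositive of the forbidden patterns (if the inequality failed, the segment from column $i$ to column $i+pk+1$ would contain one of the three patterns). Applying it with $0\le i<k$ and using $s_i\le s_0\le w$ gives $s_{pk+i+1}\le w-p$ for all $p>0$; the remaining indices $0,1,\ldots,k$ are handled simply by $s_j\le s_0\le w$. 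Unwinding, this is precisely $s\le w\cdot s(w,k)$, with no separate treatment of ``leading plateau region'' versus ``staircase region'' and no worry about a $+1$ slack: the inequality already has the $+1$ built in via the index shift $pk+1$. Your induction/suffix plan would eventually arrive at the same place, but the single inequality is the organizing observation you are missing.
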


\begin{proof}
Let $s$ be an $\mbox{IPM}_k$ configuration with $s_0 \leq w$. We have $s_i \leq w$ for all $i \geq 0$. Theorem~\ref{thm:spm-chara} implies that, for any integers $i \geq 0$ and $p > 0$, $s_{i+pk+1} \leq s_{i}-p$, thus for any $0 \leq i < k$ and $p > 0$, $s_{pk+i+1} \leq w-p$. We observe that this is equivalent to $s \leq w \cdot s(w)$. We conclude by noticing that $w \cdot s(w,k)$ is also an $\mbox{IPM}_k$ configuration.
\end{proof}

\begin{prop} \label{prop:bw-width-ipm}
For $s \in \mbox{IPM}_k(n,w,l)$, we have $s_{l+k(w-1)+1}=0$.
\end{prop}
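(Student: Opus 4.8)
The plan is to mimic the proof of Proposition~\ref{prop:bw-width} as closely as possible, replacing Lemma~\ref{lem:bw-seq-order} with its $\mbox{IPM}$ counterpart, Lemma~\ref{lem:bw-seq-order-ipm}. First I would unwind the definition of staircase width for $\mbox{IPM}$: if $s \in \mbox{IPM}_k(n,w,l)$, then $sw(s) = (w,l)$ and $s(w,l) \leq s$. The staircase $s(w,l)$ has its last occurrence of the value $1$ in position $l + k(w-1) - 1$ (there are $l$ columns of height $w$, then $k$ columns each of heights $w-1, w-2, \ldots, 1$, so the support has length $l + k(w-1)$). The idea is to locate an index where $s$ meets its socle exactly at a column of height $1$, truncate $s$ there to get a smaller $\mbox{IPM}_k$ configuration whose first entry is $1$, and then invoke Lemma~\ref{lem:bw-seq-order-ipm} with $w$ replaced by $1$ to bound the tail.

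The key steps, in order: (1) Argue that there is an index $i^\star$ with $s_{i^\star} = s(w,l)_{i^\star} = 1$. This is where I expect the main obstacle to lie, because unlike the $\mbox{SPM}$ case — where $sw(a)=w$ forces some $a_i = w - i$ with $i \le w$, and one can simply take the suffix from that point — here one must be careful that the "tightest" column of the socle one can use is one whose socle-height is exactly $1$, so that truncation gives a configuration bounded above by $1$. I would argue it as follows: by maximality of $sw(s)$, $s$ cannot dominate $s(w,l+1)$ (if $l<k$) nor $s(w+1,1)$; analysing what this failure means, together with $s(w,l) \le s$, should pin down that $s$ agrees with $s(w,l)$ on the block of columns of height $1$, in particular $s_{i^\star}=1$ for $i^\star = l + k(w-1) - 1$. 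One may also need to use that $s$ is a partition (non-increasing) so that once a column drops to the socle value it cannot climb back up. (2) The suffix $(s_{i^\star}, s_{i^\star+1}, \ldots)$ is itself an $\mbox{IPM}_k$ configuration — this is the analogue of the "suffix of an SPM configuration is an SPM configuration" fact, which follows from the pattern characterisation Theorem~\ref{thm:ipm-chara} since any forbidden pattern in a suffix is a forbidden pattern in the whole. (3) Its first component is $s_{i^\star} = 1 \le 1$, so Lemma~\ref{lem:bw-seq-order-ipm} (with $w=1$) gives that this suffix is $\le 1 \cdot s(1,k)$; in particular its component in position $k+1$ is $0$. Translating back, the component of $s$ in position $i^\star + k + 1 = l + k(w-1) - 1 + k + 1 = l + kw = l + k(w-1) + 1$ is $0$, which is exactly the claim.

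The main obstacle, as noted, is step (1): making rigorous the claim that $s$ coincides with its socle on the final height-$1$ block. I would handle this by a clean case analysis on whether $l < k$ or $l = k$, using the maximality of $sw(s)$ in the linear order on $B_k$ together with the fact that $s$ is non-increasing: if $s$ were strictly above $1$ at position $l + k(w-1) - 1$ (the last socle-$1$ column), then combined with $s(w,l)\le s$ and monotonicity of $s$ one could exhibit $s(w,l') \le s$ for a lexicographically larger $(w,l')$, contradicting the definition of $sw(s)$. Once that is established, steps (2) and (3) are routine, parallel to the $\mbox{SPM}$ argument. I would keep the write-up short by citing Lemma~\ref{lem:bw-seq-order-ipm} and Theorem~\ref{thm:ipm-chara} and only spelling out the index bookkeeping for the final translation.
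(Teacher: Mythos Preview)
Your approach has a genuine gap --- in fact, two.

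\textbf{Step (1) fails.} Maximality of $sw(s)$ does \emph{not} force $s$ to agree with its socle on the height-$1$ block, nor even at the single position $i^\star = l + k(w-1) - 1$. Counterexample: take $k=3$ and $s = (5,2,2,2) \in \mbox{IPM}_3(11)$. Its socle is $s(2,1) = (2,1,1,1)$, and $s(2,2) = (2,2,1,1,1) \not\le s$ since $s_4 = 0$; hence $sw(s) = (2,1)$. Here $i^\star = 1 + 3\cdot 1 - 1 = 3$, but $s_3 = 2 \neq 1$. What maximality \emph{does} give is that $s$ meets its socle at some \emph{step} position $i = l + kp$ (those are precisely the indices where the next staircase exceeds $s(w,l)$), with $s_i = w-p-1$; there is no reason this value should be $1$.

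\textbf{Step (3) has an arithmetic error.} Even granting $s_{i^\star} = 1$, the suffix bound from Lemma~\ref{lem:bw-seq-order-ipm} with $w=1$ gives $s_{i^\star + k + 1} = 0$, and
\[
i^\star + k + 1 \;=\; \bigl(l + k(w-1) - 1\bigr) + k + 1 \;=\; l + kw,
\]
which equals $l + k(w-1) + 1$ only when $k=1$. For $k \ge 2$ this conclusion is strictly weaker than what is required.

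The paper's argument avoids both problems by working directly with the step position: since the next staircase in $B_k$ exceeds $s(w,l)$ by exactly $1$ at the positions $l+kp$ and nowhere else, maximality of $sw(s)$ yields some $i = l+kp$ with $s_i = b_i = w-p-1$. Applying Lemma~\ref{lem:bw-seq-order-ipm} to the suffix starting at $i$ (first entry $w-p-1$) bounds it by $(w-p-1)\cdot s(w-p-1,k)$, so $s_{i + k(w-p-1) + 1} = 0$; and $i + k(w-p-1) + 1 = l + kp + k(w-p-1) + 1 = l + k(w-1) + 1$, exactly the target index. The point is not to insist on landing in the height-$1$ block, but to use whatever step index maximality hands you and let the arithmetic telescope.
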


\begin{proof}
For $s \in \mbox{IPM}_k(n,w,l)$, we have $sw(s)=(w,l)$ and we denote by $b = s(sw(s))$ the staircase base of configuration $s$. By definition of $b$, there is $i = l + kp \geq 0$ for a certain $p$ such that $s_{i} = b_{i} = w-p-1$. Since suffix $(s_{i}, s_{i+1}, \ldots)$ is also an $\mbox{IPM}_k$ configuration, by applying Lemma~\ref{lem:bw-seq-order-ipm}, we know that it is smaller than $w-p-1 \cdot s(w-p-1,k)$, thus $s_{l+k(w-1)+1} = s_{i+(w-p-1)k+1} = 0$.
\end{proof}

This proposition means that every reduced form $r \in R_k(n,w,l)$ is in fact a $(l+k(w-1)+1)$-tuple of natural numbers. We will now characterize elements in $R_k(n,w,l)$ by the following analogue of Proposition \ref{prop:reduceform-chara}.

\begin{prop} \label{prop:reducedform-chara-ipm}
A $(l+k(w-1)+1)$-tuple $r=(r_0, r_1, \ldots, r_{l+k(w-1)})$ of natural numbers is in $R_k(n,w,l)$ if and only if
\begin{itemize}
\item \textbf{(i)} $\sum_{i=0}^{l+k(w-1)} r_i = n - lw - kw(w-1)/2$;
\item \textbf{(ii)} There exists an index $0 < i_0 \leq l + k(w-1)$ of the form $l + kp_0$ (for some integer $p_0$) such that $r_{i_0}=0$;
\item \textbf{(iii)} For all $i \geq 0$ of the form $l+kp$ (for a certain integer $p$), we have $r_i \geq r_{i+1} \geq \ldots \geq r_{i+k-1} \geq r_{i+k} - 1$;
\item \textbf{(iv)} For all $i \geq 0$ and $j = i + kp + 1$ (for a certain integer $p>0$), we have $r_i \geq r_j - 1$ when $i \equiv l-1 \pmod k$, and $r_i \geq r_j$ otherwise.
\end{itemize}
\end{prop}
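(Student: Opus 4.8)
The plan is to mimic, almost line for line, the proof of Proposition~\ref{prop:reduceform-chara}, the only genuinely new ingredient being an elementary ``block lemma'' describing the socle $b := s(w,l)$. Concretely: $b$ is constant equal to $w$ on the initial block $\{0,\ldots,l-1\}$, constant equal to $w-1-p$ on each block $\{l+kp,\ldots,l+k(p+1)-1\}$ for $0\le p\le w-2$, and equal to $0$ from position $l+k(w-1)$ onward; hence $b$ drops by exactly $1$ at each position $l+kp$ (for $0\le p\le w-1$) and nowhere else. From this one reads off, for $i<j$, that $b_i-b_j$ is the number of integers $p$ with $i<l+kp\le j$; and in the special case $j=i+kp+1$ with $p>0$ this count is $p+1$ when $i\equiv l-1\pmod k$ and $p$ otherwise. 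I would establish this first, since it is precisely what makes the residue class in (iii) and the dichotomy in (iv) appear.

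For the ``only if'' direction, take $t\in\mbox{IPM}_k(n,w,l)$ and put $r=red_{(w,l)}(t)=t-b$. Condition (i) is immediate from $\sum_i r_i=\sum_i t_i-\sum_i b_i=n-\bigl(lw+kw(w-1)/2\bigr)$. Condition (iii) comes from the fact that $t$ is a partition: on a block where $b$ is flat, $t_i\ge t_{i+1}$ gives $r_i\ge r_{i+1}$, while across the step-down at $l+k(p+1)$, $t_{i+k-1}\ge t_{i+k}$ gives $r_{i+k-1}\ge r_{i+k}-1$, which is exactly the stated chain (and the same argument on the initial block yields in addition $r_0\ge\cdots\ge r_{l-1}\ge r_l-1$). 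For condition (iv) I would invoke the consequence of Theorem~\ref{thm:ipm-chara} already used for Lemma~\ref{lem:bw-seq-order-ipm}, namely $t_{i+pk+1}\le t_i-p$ for all $i\ge 0$ and $p>0$; substituting $t=r+b$ and replacing $b_i-b_{i+pk+1}$ by its value from the block lemma turns this into exactly ``$r_i\ge r_{i+pk+1}-1$ if $i\equiv l-1\pmod k$, and $r_i\ge r_{i+pk+1}$ otherwise''. Finally, condition (ii) encodes the maximality of $sw(t)=(w,l)$: the successor of $s(w,l)$ in the linear order on $B_k$ (which is $s(w,l+1)$ if $l<k$ and $s(w+1,1)$ if $l=k$) agrees, up to its first coordinate, with $b$ shifted one step to the right, so it fails to lie below $t$ at one of the step-down positions $i_0=l+kp_0$, where necessarily $t_{i_0}=b_{i_0}$, i.e. $r_{i_0}=0$, with $0<i_0\le l+k(w-1)$.

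For the ``if'' direction, let $r$ be an $(l+k(w-1)+1)$-tuple of natural numbers satisfying (i)--(iv) and set $t=r+b$. Conditions (iii) --- together with the monotonicity $r_0\ge\cdots\ge r_{l-1}\ge r_l-1$ on the initial block, which the characterization tacitly needs --- show that all consecutive differences $t_i-t_{i+1}$ are nonnegative, so $t$ is a partition. To see $t\in\mbox{IPM}_k(n)$ I would argue exactly as in Proposition~\ref{prop:reduceform-chara}: if $t$ contained one of the three forbidden patterns of Theorem~\ref{thm:ipm-chara} between columns $i$ and $j$, a direct inspection of the three patterns shows $j=i+pk+1$ for a suitable $p>0$ and $t_j\ge t_i-p+1$, whereas (iv), read backwards through the block lemma, gives $t_j\le t_i-p$, a contradiction. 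Hence $t$ is a genuine configuration, of weight $n$ by (i). Lastly $b=s(w,l)\le t$ forces $sw(t)\ge(w,l)$, while the zero $r_{i_0}=0$ from (ii) means $t_{i_0}=b_{i_0}$, which by the block lemma is precisely the obstruction preventing the successor of $s(w,l)$ from lying below $t$; hence $sw(t)=(w,l)$ and $r\in R_k(n,w,l)$.

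I expect the actual difficulty to be bookkeeping rather than conceptual: pinning down the correspondence between ``step-down positions of $b$'' and ``residue classes mod $k$'', handling the asymmetric initial block of length $l$ (and the fact that, for $l<k$, conditions (iii)--(iv) as literally written do not constrain $r$ inside that block, so the initial-block monotonicity must be supplied separately), dealing with the endpoint where $b$ becomes $0$ so that indices beyond $l+k(w-1)$ are read as $0$, and treating separately the degenerate case $l=k$ where the successor staircase basis is $s(w+1,1)$ and position $0$ itself lies in the relevant residue class. Once the block lemma is in place, conditions (iii) and (iv) are nothing but the SPM conditions ``$r_i\ge r_j-1$ for $i<j$'' redistributed over the $k$ residue classes, and both implications follow the template of Proposition~\ref{prop:reduceform-chara}.
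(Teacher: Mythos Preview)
Your approach is essentially the paper's own: both directions proceed exactly as in Proposition~\ref{prop:reduceform-chara}, with the block description of $b=s(w,l)$ supplying the key identity $b_i-b_{i+kp+1}\in\{p,p+1\}$ (the paper states this identity inline rather than isolating it as a lemma), and with Theorem~\ref{thm:ipm-chara} reduced to the inequality $t_{i+kp+1}\le t_i-p$ to handle (iv) and the absence of forbidden patterns. Your treatment is in fact more careful than the paper's on two points it leaves implicit: the maximality argument behind (ii), and the observation that (iii)--(iv) as literally written do not force $r_0\ge\cdots\ge r_{l-1}$ on the initial block when $l<k$, so that the monotonicity of $t$ there must be argued (or read into (iii)) separately.
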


\begin{proof}
Let $s \in \mbox{IPM}_k(n,w,l)$ and $r=red_{(w,l)}(s)$. Conditions (i), (ii) and (iii) come from the definitions of $\mbox{IPM}_k(n,w,l)$ and of $red_{(w,l)}$. For the condition (iv), we notice that Theorem~\ref{thm:ipm-chara} implies that for any integers $i \geq 0$ and $j = i + kp + 1$ (for a certain integer $p>0$), $s_{j} \leq s_{i}-p$. Since we have $s(w,l)_i = s(w,l)_j + p + 1$ when $i \equiv l-1 \pmod k$ and $s(w,l)_i = s(w,l)_j + p$ otherwise, we easily verify that condition (iv) holds.

Conversely, let $r$ be a $(l+k(w-1)+1)$-tuple that verifies conditions (i), (ii), (iii) and (iv), and $t=r+s(w,l)$. It follows from conditions (i) and (iii) that $t$ is a partition of $n$, and it suffices to prove that $t \in \mbox{IPM}_k(n)$, because condition (ii) will ensure that $t$ has the correct basis. Suppose that there is a forbidden pattern in $t$ between column $c$ and $c+kp+1$ for some $p>0$, and we have $t_{c+kp+1} = t_c - p + 1$. If $c \equiv l-1 \pmod k$, we have $s(w,l)_c = s(w,l)_{c+kp+1} + p + 1$, thus $r_c = r_{c+kp+1} - 2$; otherwise, we have similarly $r_c = r_{c+kp+1} - 1$. This cannot happen when (iv) is verified, thus $t \in \mbox{IPM}_k(n)$.
\end{proof}

We notice that conditions (i) and (ii) ensure that the reduced form is in the $R_k(n,w,l)$ with correct parameters. Conditions (iii) and (iv) are stable by prefix-taking, the same as in the case of \mbox{SPM}. However, these two conditions, and also the length of tuple, are parametrized by $w$ and $l$. Simply taking prefix will preserve (iii) and (iv), but with parameters $w,l$ not compatible with the length of tuple. Therefore, if we mimic the decomposition theorem for \mbox{SPM} in a naive way, the part before the first zero will not be a valid reduced form. We try to circumvent this problem by extending our definition of reduced form.

\begin{defi}{\textbf{Extended reduced forms, augmented reduced forms}} \label{def:extended-reduced-form-ipm}
For a pair of positive integers $(w,l)$ and a tuple $t$ of length $l+k(w-1)+1$, we say that $t$ is an \emph{extended reduced form} if $t$ verifies conditions (ii), (iii) and (iv) in Proposition \ref{prop:reducedform-chara-ipm}, and $t$ is called an \emph{augmented reduced form} if conditions (iii) and (iv) are verified.

We denote by $R'_k(w,l)$ the set of extended reduced forms, and $A_k(w,l)$ the set of augmented reduced forms. Clearly we have $R'_k(w,l) \subset A_k(w,l)$.
\end{defi}

Clearly the subset of all $(l+k(w-1)+1)$-tuples in $R'_k(w,l)$ is exactly the union of $R_k(n,w,l)$ for all possible $n$. 

We now introduce a function that will be used to turn augmented reduced forms into the more regular extended reduced forms.

\begin{defi} \label{def:socle-moving-function}
For positive integers $k$ and $l$ such that $0 \leq l < k$, we define the function $pl_{k}^{l}$ on tuples with arbitrary length of non-negative integers as follows: set $r'=pl_{k}^{l}(r)$, we define $r'_i=r_i-1$ for $i \equiv l \pmod k$ and $r'_i=r_i$ otherwise. This function is undefined when there exists some $i \equiv l \pmod k$ such that $r_i=0$ .
\end{defi}

\begin{figure}
\centering
\begin{tikzpicture}
\def \mysqr{rectangle +(0.3,0.3)}
\draw[->] (3,0.9) -- (4.7,0.9) node[above,midway] {$pl_3^2$};

\foreach \x in {0,0.3,...,2.4} \filldraw[black!50] (\x,0) \mysqr;
\foreach \x in {0,0.3,...,1.5} \filldraw[black!50] (\x,0.3) \mysqr;
\foreach \x in {0,0.3} \filldraw[black!50] (\x,0.6) \mysqr;

\foreach \x in {5,5.3,...,7.7} \filldraw[black!50] (\x,0) \mysqr;
\foreach \x in {5,5.3,...,6.8} \filldraw[black!50] (\x,0.3) \mysqr;
\foreach \x in {5,5.3,5.6} \filldraw[black!50] (\x,0.6) \mysqr;

\foreach \x in {0,0.3,...,3} \draw (\x,0) \mysqr;
\foreach \x in {0,0.3,...,2.1} \draw (\x,0.3) \mysqr;
\foreach \x in {0,0.3,...,1.2} \draw (\x,0.6) \mysqr;
\foreach \x in {0,0.3,...,1.2} \draw (\x,0.9) \mysqr;
\foreach \x in {0,0.3,...,0.9} \draw (\x,1.2) \mysqr;
\foreach \x in {0,0.3} \draw (\x,1.5) \mysqr;
\draw (0,1.8) \mysqr;
\node[text centered] at (1,-0.55) {$r = (4,3,3,2,0,1,1,0,1,1) \in A_3(3,2)$};

\foreach \x in {5,5.3,...,8} \draw (\x,0) \mysqr;
\foreach \x in {5,5.3,...,7.1} \draw (\x,0.3) \mysqr;
\foreach \x in {5,5.3,...,6.2} \draw (\x,0.6) \mysqr;
\foreach \x in {5,5.3,...,6.2} \draw (\x,0.9) \mysqr;
\foreach \x in {5,5.3,...,5.9} \draw (\x,1.2) \mysqr;
\foreach \x in {5,5.3} \draw (\x,1.5) \mysqr;
\draw (5,1.8) \mysqr;
\node[text centered] at (7,-0.55) {$r' = (4,3,2,2,0,0,1,0,0,1) \in A_3(3,3)$};
\end{tikzpicture}
\caption{An example of application of $pl_k^l$, with $k=3, l=2$}
\label{figure:peel-ipm}
\end{figure}

The example in Figure \ref{figure:peel-ipm} shows graphically the effect of $pl_k^l$. Intuitively, if $r$ is the augmented reduced form of some $\mbox{IPM}_k$ configuration with respect to the basis $s(w,l)$, then $pl_k^l(r)$ is the augmented reduced form of the same $\mbox{IPM}_k$ configuration with respect to the next basis, in the linear order for $B_k$.

We now investigate some properties of $pl_{k}^{l}$ in the following lemma.

\begin{lem} \label{lemma:socle-moving-properties}
The function $pl_{k}^{l}$ verifies the following properties.
\begin{enumerate}
\item The function $pl_{k}^{l}$ is undefined on the set of extended reduced form $R'_k(w,l)$.
\item For a tuple $r \in A_k(w,l)$, if $pl_{k}^{l}(r)$ is defined, we have $pl_{k}^{l}(r) \in A_k(w',l')$, where $w'=w+1$, $l'=1$ if $l=k$, and $w'=w$, $l'=l+1$ otherwise.
\item For a tuple $r \in A_k(w,l)$, we recursively define the sequence of tuples $r = r^{(0)}, r^{(1)}, \ldots$  by $r^{(i+1)} = pl_{k}^{l^{(i)}}(r^{(i)}) \in A_k(w^{(i+1)},l^{(i+1)})$. This sequence becomes undefined after a certain index $c$ satisfying $r^{(c)} \in R'_k(w^{(c)},l^{(c)})$. Moreover, $(w^{(i)},l^{(i)})$ does not depend on $r$.
\item In the case of the previous assertion, we say that $r=Aug_{w,l}(r^{(c)},c)$ is equal to $r^{(c)}$ augmented by $c$. Regarded as a function, $Aug_{w,l}$ is a bijection between $\{ (r',c) | r' \in R'_k(w^{(c)}, l^{(c)}), c \in \mathbb{N} \}$ and $A_k^{w,l}$.
\end{enumerate}
\end{lem}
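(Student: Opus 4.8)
\textit{Plan.} All four assertions rest on one computation: if $(w',l')$ denotes the lexicographic successor of $(w,l)$ (so $l'=l+1,\ w'=w$ when $l<k$, and $l'=1,\ w'=w+1$ when $l=k$), then, padding both staircases to a common length window, $s(w',l')=s(w,l)+e^{(l)}$ where $e^{(l)}_i=1$ for $i\equiv l\pmod k$ and $e^{(l)}_i=0$ otherwise. This is the reason the map $pl_k^{l}$ --- subtract $1$ at the positions $\equiv l\pmod k$ --- sends the reduced form of a configuration with respect to $s(w,l)$ to the reduced form of the \emph{same} configuration with respect to $s(w',l')$ (cf. Figure~\ref{figure:peel-ipm}); throughout I read tuples as non-negative sequences with finite support, the stated length being merely a support bound. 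Assertion~1 is then immediate: if $r\in R'_k(w,l)$, condition (ii) of Proposition~\ref{prop:reducedform-chara-ipm} (which is part of the definition of an extended reduced form) gives an index of the form $l+kp_0$ where $r$ vanishes, hence an index $\equiv l\pmod k$ where $r$ vanishes, so $pl_k^{l}(r)$ is undefined by Definition~\ref{def:socle-moving-function}. For assertion~2 I would check conditions (iii) and (iv) for the parameters $(w',l')$ directly on $r'=pl_k^{l}(r)$: since $r'$ and $r$ differ only by $-1$ at positions $\equiv l\pmod k$, and since passing from $(w,l)$ to $(w',l')$ shifts by exactly one the residue class carrying the ``$-1$'' of slack in (iii) and the exceptional residue $l-1\pmod k$ in (iv), every inequality of (iii)/(iv) for $(w',l')$ reduces to an inequality of (iii)/(iv) for $(w,l)$, true by hypothesis; non-negativity of $r'$ holds because $pl_k^{l}(r)$ being defined means each decremented entry was $\ge 1$. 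Hence $r'\in A_k(w',l')$.

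For assertion~3, termination is an infinite-descent argument: whenever $pl_k^{l^{(i)}}(r^{(i)})$ is defined, $r^{(i+1)}$ is a non-negative tuple with $\sum_j r^{(i+1)}_j<\sum_j r^{(i)}_j$, because at least one position $\equiv l^{(i)}\pmod k$ lies in the support window and is decremented; as these sums are non-negative integers, the sequence becomes undefined after some index $c$. At that index, $pl_k^{l^{(c)}}(r^{(c)})$ undefined means $r^{(c)}$ vanishes at some position $\equiv l^{(c)}\pmod k$, and, combined with $r^{(c)}\in A_k(w^{(c)},l^{(c)})$ (assertion~2 iterated $c$ times), this gives $r^{(c)}\in R'_k(w^{(c)},l^{(c)})$ once one checks that the vanishing position may be taken of the admissible shape $l^{(c)}+kp_0$ inside the window. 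That $(w^{(i)},l^{(i)})$ is independent of $r$ follows by induction on $i$: assertion~2 shows each step moves $A_k$ at one pair to $A_k$ at its lexicographic successor, and the successor depends only on the pair.

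For assertion~4, let $\overline{pl}_k^{l}$ be the everywhere-defined map ``add $1$ at the positions $\equiv l\pmod k$''; it is a two-sided inverse of $pl_k^{l}$ wherever the latter is defined. Reading the computation of assertion~2 in reverse (the socle identity used the other way), $\overline{pl}_k^{l}$ carries $A_k(w',l')$ into $A_k(w,l)$, and in fact onto $\{r\in A_k(w,l):pl_k^{l}(r)\text{ defined}\}$, so $pl_k^{l}$ and $\overline{pl}_k^{l}$ are mutually inverse bijections between that set and $A_k(w',l')$. With assertions~1 and~3 this yields the disjoint decomposition $A_k(w,l)=R'_k(w,l)\sqcup\overline{pl}_k^{l}(A_k(w',l'))$, and iterating it along the sequence $(w^{(i)},l^{(i)})$ --- which exhausts $A_k(w,l)$ in finitely many steps by assertion~3 --- writes $A_k(w,l)$ as the disjoint union over $c\ge 0$ of the images of $R'_k(w^{(c)},l^{(c)})$ under the $c$-fold composite of the relevant $\overline{pl}$'s. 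That composite is exactly $Aug_{w,l}(\cdot,c)$, with inverse $r\mapsto(r^{(c)},c)$ where $c$ is the halting index of assertion~3; this is the claimed bijection.

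The main obstacle is the step used in assertions~3 and~4 that upgrades ``$r\in A_k(w,l)$ with $pl_k^{l}(r)$ undefined'' to ``$r\in R'_k(w,l)$'', i.e. that forces the zero produced by the failure of $pl_k^{l}$ to sit at an index of the form $l+kp_0$ inside the support window. When $l<k$ every position $\equiv l\pmod k$ in the window already has that shape, so the real content is the wrap-around case $l=k$, where one must use conditions (iii) and (iv) to propagate a zero from a non-admissible position (such as position $0$) to an admissible one; making this propagation airtight --- equivalently, showing that an augmented reduced form on which $pl_k^{l}$ fails is already an extended reduced form --- is the delicate heart of the lemma.
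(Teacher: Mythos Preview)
Your outline matches the paper's proof closely. For assertion~1 the paper gives the same one-line argument from condition~(ii). For assertion~2 the paper merely says ``simple verification of conditions (iii) and (iv)'', which is precisely what you sketch in more detail via the socle identity $s(w',l')=s(w,l)+e^{(l)}$. For assertion~4 the paper argues that $Aug_{w,l}$ is clearly surjective and that injectivity follows by uniquely reversing the iterative process along the $r$-independent sequence $(w^{(i)},l^{(i)})$; your explicit inverse $\overline{pl}_k^{l}$ and the resulting telescoping decomposition $A_k(w,l)=R'_k(w,l)\sqcup\overline{pl}_k^{l}(A_k(w',l'))$ is a more concrete packaging of the same idea.

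The only substantive difference is in assertion~3. For termination, the paper observes that $pl_k^{l}$ is weakly decreasing in the sequence order and \emph{strictly} decreasing at one recurring value of $l$ (the paper writes ``$l=1$'', justified by ``a tuple must have its first element''), and then appeals to well-foundedness of the sequence order; your total-sum argument is an equally valid variant of the same descent. As for the step you single out as the ``main obstacle'' --- passing from ``$pl_k^{l^{(c)}}(r^{(c)})$ is undefined'' to ``condition~(ii) holds for $r^{(c)}$'' --- the paper does \emph{not} isolate the wrap-around case $l=k$ at all: it simply writes ``that is to say condition (ii) is verified, following the same reasoning as in the first assertion''. So the subtlety you flag as the ``delicate heart of the lemma'' is treated as immediate in the paper, and your proof is at least as complete as the published one on this point.
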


\begin{proof}
The first assertion follows from the definitions of both $R'_k(w,l)$ and $pl_{k}^{l}$. More precisely, the elements of $R'_k(w,l)$ verify condition (ii) in Proposition \ref{prop:reducedform-chara-ipm}, which prevents $pl_{k}^{l}$ to be defined.

The second assertion comes from simple verification of conditions (iii) and (iv) in Proposition \ref{prop:reducedform-chara-ipm} for $pl_{k}^{l}(r)$.

We now prove the third assertion using the result of the second. We will first prove that the sequence terminates, then discuss the properties of $r^{(c)}$ and $(w^(i), l^{(i)})$.

To show that the process terminates, we notice that $pl_{k}^{l}$ is always a decreasing function with respect to the sequence order, and strictly decreasing in the case $l=1$, since a tuple must have its first element. If the sequence $r^{(0)} = r, r^{(1)}, \ldots$ does not terminate, the case $l=1$ will occur an infinite number of times, thus we can extract an infinite strictly decreasing sequence from the original one. This contradicts the well-foundedness of the sequence order of tuples of non-negative integers with fixed length. Thus termination of the process follows.

For the iterative process to terminate, $pl_{k}^{l^{(c)}}$ must be undefined on $r^{(c)}$, that is to say condition (ii) is verified, following the same reasoning as in the first assertion. Combining with $r^{(c)} \in A_k(w^{(c)},l^{(c)})$, we have $r^{(c)} \in R'_k(w^{(c)},l^{(c)})$. The independence of $(w^{(i)},l^{(i)})$ from $r$ is implied by the independence of $w', l'$ from $r$ in the second assertion.

For the last assertion, $Aug_{w,l}$ is clearly surjective. We also notice that, given a pair $(r',c)$ with $r' \in R'_k(w^{(c)}, l^{(c)})$, it is easy to uniquely reconstruct a tuple $r$ such that $r=Aug_{w,l}(r^{(c)},c)$ by reversing the recursive process indexed by the sequence $(w^{(0)},l^{(0)}), \ldots, (w^{(c)},l^{(c)})$ independent of $r'$. Therefore, $A_k^{w,l}$ is also injective, which proves the assertion.
\end{proof}

We notice that $s(w,l) < s(w',l')$ (as defined in the second assertion of Lemma \ref{lemma:socle-moving-properties}) are consecutive elements in the linear order $B_k$ with respect to the sequence order. In fact, let $m$ be the length of tuple $r$, and $s(w,l)|_{m}$ the prefix of $s(w,l)$ of length $m$, then we can easily verify that $r + s(w,l)|_{m} = r' + s(w',l')|_{m}$, where addition is intended as pointwise. This equality means that the function $pl_{k}^{l}$ transforms an augmented reduced form on a certain socle to another augmented reduced form on the smallest socle that covers the previous one, and these two augmented reduced forms are equivalent in the sense that they actually give the same prefix of a configuration, but with the removal of different staircases.

With all these modifications, we can state an analogue of our \mbox{SPM} reduced form decomposition theorem.

\begin{thm} \label{thm:rec-decomp-red-ipm}
An extended reduced form $r \in R'_k(w,l)$ can be uniquely decomposed into the following form:
\[ r = (t_0, \ldots, t_{l+kp-1}, 0, u_0, \ldots, u_m), \]
with some integer $p$ such that $t = (t_0, \ldots, t_{l+kp-1})$ and $u=(u_0, \ldots, u_m)$ verify the following conditions:
\begin{enumerate}
\item If $u$ is not empty, we have $u_i \in \{ 0,1 \}$ for $i \equiv -1 \pmod k$, and $u_i=0$ otherwise.
\item We have $t \in A_k(w,l)$, but $t \notin R'_k(w,l)$.
\end{enumerate}
This decomposition will be denoted as $r=((t',c),p,u)$, with $t=Aug_{w,l}(t^{(c)},c)$ and $t'=t^{(c)}$. When $t$ is empty, we take $c=0$.
\end{thm}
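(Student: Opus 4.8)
The plan is to follow the structure of the proof of Theorem~\ref{thm:rec-decomp-red} for \mbox{SPM}, the essential new feature being that the ``subtract the minimum $m$'' normalization used there must be replaced by the peeling operator $pl_k^l$ and the reconstruction map $Aug_{w,l}$ of Definition~\ref{def:socle-moving-function} and Lemma~\ref{lemma:socle-moving-properties}. Call a position \emph{special} if it has the form $l+kp$. By condition~(ii) of Proposition~\ref{prop:reducedform-chara-ipm}, the extended reduced form $r$ has at least one zero at a special position; let $p$ be the smallest integer with $r_{l+kp}=0$ and split $r$ there, as $r=(t_0,\ldots,t_{l+kp-1})\cdot(0)\cdot(u_0,\ldots,u_m)$, so that $t$ is the prefix of length $l+kp$ and $u$ is the remaining suffix. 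The theorem then reduces to three assertions: (a)~this $u$ has the claimed shape (entries in $\{0,1\}$ at positions $\equiv-1\pmod k$, and zero elsewhere); (b)~this $t$ lies in $A_k(w,l)\setminus R'_k(w,l)$; (c)~no other decomposition satisfies (a) and (b). Granting (a) and (b), step~(c) is immediate: condition~2 forbids a special zero inside $t$, so the split must occur at the first special zero of $r$, which fixes $p$, hence $t$ and $u$; and the pair $(t',c)$ is then fixed because $Aug_{w,l}$ is a bijection by assertion~4 of Lemma~\ref{lemma:socle-moving-properties}. The convention $c=0$ for empty $t$ is carried over verbatim from the \mbox{SPM} statement.

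For (a) I would argue as follows, assuming $u$ is nonempty (there is nothing to prove otherwise). Since $r_{l+kp}=0$, condition~(iii) of Proposition~\ref{prop:reducedform-chara-ipm} applied at the special position $l+kp$, together with non-negativity of the entries, forces $r_{l+kp+1}=\cdots=r_{l+kp+k-1}=0$ (and $r_{l+kp+k}\le 1$); in particular the $k$ consecutive positions $l+kp,\,l+kp+1,\,\ldots,\,l+kp+k-1$ all lie in range and carry a zero. Now apply condition~(iv) at each of these $k$ indices $i$ in turn: for every $j=i+kq+1$ with $q>0$ one gets $r_j\le 0$, hence $r_j=0$, whenever $i\not\equiv l-1\pmod k$, and $r_j\le 1$ for the single index among the $k$ with $i\equiv l-1\pmod k$. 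As $i$ runs over those $k$ indices its residue runs over all classes mod $k$, so the corresponding $j=i+kq+1$ sweep out every position past $l+kp$; thus every non-special position beyond $l+kp$ is $0$ and every special position beyond it is $\le 1$. Rewriting this in the indexing $u_i=r_{l+kp+1+i}$ — so that $u_i$ lands at a special position of $r$ exactly when $i\equiv-1\pmod k$ — gives precisely condition~1. I expect this residue/index bookkeeping to be the main obstacle: it is the only genuinely new computation relative to the \mbox{SPM} proof, and the crucial subtlety is that conditions~(iii) and~(iv) must be used together — condition~(iii) alone would allow a run of $1$'s to persist arbitrarily far out in the tail, and it is condition~(iv) that forces the non-special entries there to vanish.

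For (b) I would use that conditions~(iii) and~(iv) are preserved under taking prefixes (as already observed just after Proposition~\ref{prop:reducedform-chara-ipm}), so $t$ inherits them; since $A_k(w,l)$ consists of exactly the tuples verifying (iii) and~(iv) (Definition~\ref{def:extended-reduced-form-ipm}), this gives $t\in A_k(w,l)$, the matching of the length $l+kp$ of $t$ with the parameters that arise along the $pl_k^l$-sequence being exactly the content of assertion~3 of Lemma~\ref{lemma:socle-moving-properties} (the pair-sequence $(w^{(i)},l^{(i)})$ is intrinsic to $(w,l)$ and independent of the tuple). On the other hand, by minimality of $p$ the prefix $t$ has no zero at a special position, hence it violates condition~(ii) and so $t\notin R'_k(w,l)$. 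Therefore $pl_k^l$ is defined on $t$ — its only obstruction, a zero at a position $\equiv l\pmod k$, is absent — and by assertion~3 of Lemma~\ref{lemma:socle-moving-properties} the iterated peeling $t=t^{(0)},t^{(1)},\ldots$ terminates at a unique $t^{(c)}=t'\in R'_k(w^{(c)},l^{(c)})$, so that $t=Aug_{w,l}(t',c)$ and $r=((t',c),p,u)$, as claimed. Assembling (a), (b) and (c) completes the proof.
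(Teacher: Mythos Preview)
Your proof is correct and takes essentially the same approach as the paper: split $r$ at the first special zero, use prefix-stability of conditions~(iii) and~(iv) to place $t$ in $A_k(w,l)\setminus R'_k(w,l)$, invoke assertion~3 of Lemma~\ref{lemma:socle-moving-properties} for the representation $t=Aug_{w,l}(t',c)$, and obtain uniqueness from the minimality of $p$ together with the bijectivity of $Aug_{w,l}$ (assertion~4). Your derivation of the shape of $u$ is in fact more careful than the paper's: the paper attributes condition~1 to condition~(iii) alone, whereas you correctly observe that condition~(iii) by itself would allow a run of $1$'s to persist past the first special zero, and that condition~(iv) applied from the block of $k$ consecutive zeros is what forces the non-special entries of $u$ to vanish.
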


\begin{proof}
The existence of index $i=l+kp$ such that $r_i=0$ is given by condition (ii). To ensure $t \notin R'_k(w,l)$, we take the smallest such index. By condition (iii), we know that $u_i \in \{ 0,1 \}$ for $i \equiv -1 \pmod k$ and $u_i=0$ otherwise, and this does not violate condition (iv). It is clear that $t \in A_k(w,l)$, since condition (iii) and (iv) are invariant under prefix-taking. Therefore, from the third assertion of Lemma \ref{lemma:socle-moving-properties}, we have the existence of $(t^{(c)},c)$ as a representation of $t$. We now have the validity of this decomposition. The uniqueness is provided by the uniqueness of smallest index $i=l+kp$ such that $r_i=0$, and the uniqueness of the pair $(r^{(c)},c)$ giving $r=Aug_{w,l}(r^{(c)},c)$ provided in the last assertion of Lemma \ref{lemma:socle-moving-properties}.
\end{proof}

Since $R_k(n,w,l)$ is the subset of $R'_k(w,l)$ consisting of all $(l+k(w-1)+1)$-tuples with correct weight, we can use this decomposition theorem of extended reduced forms to enumerate and to generate \mbox{IPM} configurations, following the same approach as previously done for \mbox{SPM}.

\section{Future work}
We would like to extend this approach to more general sand pile models, for example \mbox{BSPM}, a bi-dimensional version of \mbox{SPM}. However, this extension does not seem to be easy. There are several difficulties. Firstly, rules now involve two directions, which weakens the foundation of staircase bases on well-behaving rules. Secondly, we do not yet have a good characterization of configurations in \mbox{BSPM}, even for stable ones. Lastly, simulations show that fixed points in \mbox{BSPM} have a great variety of different shapes, which would be difficult to approximate using a ``small'' set of staircase bases.

\section*{Acknowledgement}
We deeply appreciate the anonymous reviewers and the referee for their precious corrections and improvements, which made our work simpler and more accessible.

\bibliographystyle{plain}
\bibliography{spm-jn-rev}
\end{document}